\providecommand{\algorithmname}{Algorithm}
\newtheorem{definition}{Definition}%
\theoremstyle{plain}
\newtheorem{thm}{\protect\theoremname}
\theoremstyle{plain}
\newtheorem{prop}[thm]{\protect\propositionname}
\theoremstyle{remark}
\newtheorem{rem}[thm]{\protect\remarkname}
\theoremstyle{plain}
\theoremstyle{plain}
\def\documenttitle{Lost customer approximations of SOQN with application to RMFS}
\itshape\color{dkgreen},       
\providecommand{\corollaryname}{Corollary}
\providecommand{\lemmaname}{Lemma}
\providecommand{\propositionname}{Proposition}
\providecommand{\remarkname}{Remark}
\providecommand{\theoremname}{Theorem}
\begin{document}
	\global\long\def\myV{V}%
	\global\long\def\myv{v}%
	\global\long\def\evect{\mathbf{e}}%
	\global\long\def\maxdiff{\widetilde{\max}(\mathbf{k})}%
	\global\long\def\routingprob{p_{i}(\mathbf{k})}%
	
	\global\long\def\kvect{\mathbf{k}}%
	\global\long\def\mvect{\mathbf{m}}%
	\global\long\def\nvect{\mathbf{n}}%
	\global\long\def\pvect{\mathbf{p}}%
	\global\long\def\zvect{\mathbf{z}}%
	\global\long\def\bvect{\mathbf{b}}%
	\global\long\def\routep#1#2{r\left(#1,#2\right)}%
	\global\long\def\Mset{\overline{M}}%
	\global\long\def\Jset{\overline{J}}%
	\global\long\def\supplierrate{\nu}%
	\global\long\def\phantomeq{\mathrel{\phantom{=}}}%
	
\title{Load balancing in a network of queueing-inventory systems}

\date{}
\renewcommand\Affilfont{\small \itshape}
\author{Sonja Otten} 
\affil{Hamburg University of Technology, Am Schwarzenberg-Campus 3, 21073 Hamburg}
\renewcommand\Authands{, }

\maketitle
\vspace{-1.0cm}
\begin{abstract}
	\noindent We study a supply chain consisting of production-inventory systems at several locations which are coupled by a common supplier. Demand of customers arrives at each production system according to a Poisson process and is lost if the local inventory is depleted (`lost sales'). To satisfy a customer's demand a server at the production system needs raw material from the associated local inventory. The supplier manufactures raw material to replenish the local inventories, which are controlled by a continuous review base stock policy. The routing of items depends on the on-hand inventory at the locations with the aim to obtain `load balancing'.
	We show that the stationary distribution has a product form of the marginal distributions of the production subsystem and the inventory-replenishment subsystem. For the marginal distribution of the production subsystem we derive an explicit solution and for the marginal distribution of the inventory-replenishment subsystem we derive  an explicit solution resp.~a recursive algorithm for some special cases.
	
	\begingroup
	\renewcommand\thefootnote{}%
	\footnote{
		ORCID and email address:\\
		Sonja Otten \includegraphics[width=0.8em]{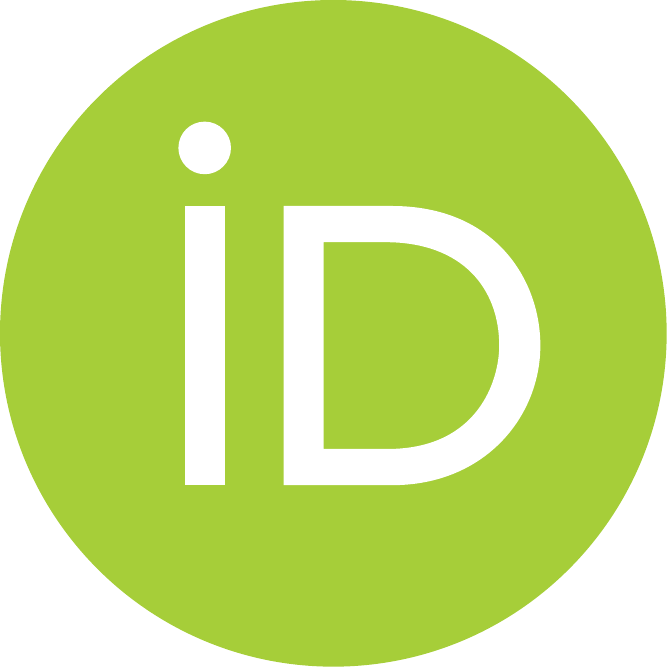}\hspace{0.4em}\url{https://orcid.org/0000-0002-3124-832X},
		sonja.otten@tuhh.de,\\

	}%

	\addtocounter{footnote}{-1}%
	\endgroup
	
\end{abstract}
\emph{MSC 2010 Subject Classification:} 60K25; 68M20; 90B22; 90B05; 90B06

\noindent \emph{Keywords:} queueing networks, inventory control, base stock policy, product-form steady state


\section{Introduction}
We consider a supply chain consisting of 
production-inventory systems at several locations which
are coupled by a common supplier. Demand of customers arrives at each
production system and is lost if the local inventory is depleted.
To satisfy a customer's demand a server at the production system takes
exactly one unit of raw material from the associated local inventory.
The supplier manufactures raw material to replenish the local inventories,
which are controlled by a continuous review base stock policy.
We focus on the research of the network's behaviour, where the supplier
consists only of one machine (single server) and replenishes the inventories
at all locations. The items of raw material are indistinguishable
(exchangeable). The routing of items depends on
the on-hand inventory at the locations with the aim to obtain `load balancing'. 
More precisely, we consider strict priorities, i.e.~the finished item of raw material is sent to the
location(s) with the highest difference between the on-hand inventory and the capacity
of the inventory.

Although we describe our system in terms of production and manufacturing, there are other applications where our
model can be used, e.g. distributed retail systems where customers' demand has to be satisfied from the local inventories and
delivering the goods to the customers' needs a non-negligible amount of time; the replenishment for the local retail stations
is provided by a production network. Another setting is a distributed set of repair stations where spare parts are needed to
repair the brought-in items which are held in local inventories. Production of the needed spare parts and sending them to the
repair stations is again due to a production network.

We use queueing theory and inventory theory to analyse the above-described production-inventory-replenishment system. Queueing
theory and inventory theory are fields of Operations Research with different methodologies to optimise e.g.~production
processes and inventory control. In classical Operations Research, queueing theory and inventory theory are often considered
as disjoint areas of research. On the other side, the emergence of complex supply chains calls for integrated queueing-inventory
models, where, as usual, the production systems are modelled by queueing systems and the inventories are modelled by classical inventories 
with replenishment policies. These integrated queueing-inventory models are the focus of our present
research. In particular, we develop a Markovian stochastic model of the production-inventory-replenishment system. 

We are able to prove that the stationary distribution has a product form. 
This means that the steady states of the production network and
the inventory-replenishment complex decouple asymptotically and the equilibrium for the
production subsystem decomposes in true independent coordinates. 
This product form structure of the joint stationary distribution is often characterised
as the global process being `separable'. Separability is an important (but rather rare) property
of complex systems.

The paper is organised as follows. In \prettyref{sec:literature}, we describe the related literature. 
In \prettyref{sec:BSN-STAR2-modeldescription}, we introduce our integrated
model for production and inventory management. To model our production-inventory-replenishment system as a queueing network, we
make a number of simplifying assumptions. 
This enables us to compute the stationary distribution and to show in \prettyref{sec:stationary-behaviour}
that it is of product form.  
In \prettyref{sec:strucural properties} we show some structural properties 
of the stationary inventory-replenishment subsystem.

\section{Related literature and own contributions \label{sec:literature}}
Relevant for our research are queueing theory and inventory control,
in particular integrated queueing-inventory models.\\

\textit{Literature on queueing theory} is overwhelming, so we point
only to the most relevant sources for our present investigation. Our
production systems are classical $M/M/1/\infty$ queueing systems
which constitute a network of parallel queues connected to the central
supplier queue, cf.\ Kelly~\cite{kelly:79} and Chao, Miyazawa,
and Pinedo~\cite{chao;miyazawa;pinedo:99} for general networks of
queues.

Special queueing networks, which model multi-station maintenance and
repair systems, are investigated by Ravid, Boxma, and Perry~\cite{ravid;boxma;perry:13}
and Daduna~\cite{daduna:90} and references therein. In these systems,
circulating items are ``exchangeable''. This feature will occur
in our model as well.\\ 

\textit{Literature on inventory theory} is, similar to that on queueing
theory, overwhelming, so we only point to some references closely
related to our investigations. There are two extreme
cases of arriving customers' reactions in the situation that inventory
is depleted when demand arrives (cf.\ Silver, Pyke, and Peterson~\cite{silver;pyke;peterson:98}):
Either backordering, which means that customers are willing to wait
for their demands to be fulfilled, or lost sales, which means that
demand is lost when no inventory is available on hand.

In classical inventory theory it is common to assume that excess demand
is backordered (Silver, Pyke, and Peterson~\cite{silver;pyke;peterson:98},
Zipkin~\cite[p. 40]{zipkin:2000}, Axsäter~\cite{axsaeter:2000}).
However, studies by Gruen, Corsten, and Bharadwaj~\cite{gruen;corsten;bharadwaj:2002}~and
Verhoef and Sloot~\cite{verhoef;sloot:2006} analyse customers' behaviour
in practice and show that in many retail settings most of the original
demand can be considered to be lost in case of a stockout.

For an overview of the literature on systems with lost sales we refer
to Bijvank and Vis~\cite{bijvank;vis:11}. They present a classification
scheme for the replenishment policies most often applied in literature
and practice, and they review the proposed replenishment policies,
including the base stock policy. According to van Donselaar and Broekmeulen~\cite{donselaar;broekmeulen:13}
``Their literature review confirms that there are only a limited
number of papers dealing with lost sales systems and the vast majority
of these papers make simplifying assumptions to make them analytically
tractable.''

Rubio and Wein~\cite{rubio;wein:96} and Zazanis~\cite{zazanis:94}
investigated classical single item and multi-item inventory systems.
Similar to our approach they used methods and models from queueing
theory to evaluate the performance of base stock control policies
in complex situations.

Reed and Zhang~\cite{reed;zhang:17} study a single item inventory
system under a base stock policy with backordering and a supplier
who consists of a multi-server production system. Their aim is to
minimise a combination of capacity, inventory and backordering costs.
They develop a square-root rule for the joint decision. Furthermore,
they justify the rule analytically in a many-server queue asymptotic
framework.

Because we consider queueing-inventory systems where inventories are
controlled by base stock policies, we mention here that Tempelmeier~\cite[p.\ 84]{tempelmeier:2005}
argued that base stock control is economically reasonable if the order
quantity is limited because of technical reasons. 

The base stock policy is ``(...)\ more suitable for item with low
demand, including the case of most spare parts'' \cite[p. 661]{rego:11}.

Morse~\cite[p.\ 139]{morse:1958} investigated (pure) inventory systems
that operate under a base stock policy. He gives a very simple example
where the concept ``re-order for each item sold'' is useful: Items
in inventory are bulky, and expensive (automobiles or TV sets\footnote{The paper is from 1958.}).
He uses queueing theory to model the inventory systems, analogously
to~\cite{reed;zhang:17}, etc.\\

\textit{Literature on integrated queueing-inventory models} (i.e.\ queueing
theory in combination with inventory theory) is also overwhelming. For a recent review we refer to Krishnamoorthy, Shajin and Narayanan~\cite{krishnamoorthy;shajin;narayanan:21}.

In this paper, we analyse an extension of the complex supply chain from \cite{otten;krenzler;daduna:15}, 
where routing of items depends on the on-hand inventory at the locations with the aim to obtain "load balancing". It can be differed between two load balancing policies.
In \cite[Section 3.4]{otten:17} we consider weak priorities, i.e.~the finished item of raw material is sent with greater probability to the location with higher difference
between the on-hand inventory and the capacity of the inventory.
In this paper, we focus on strict priorities, i.e.~the finished item of raw material is sent to the
location(s) with the highest difference between the on-hand inventory and the capacity
of the inventory. Therefore, we mention here some
\textit{Literature about load balancing policies with strict priorities}:\\
The research of such systems is motivated by state-dependent routing/branching
of customers. The ``(...)\ purpose of introducing flexible state-dependent
routing strategies is to optimally utilize network resources and to
minimise network delay and response times''~\cite[p.\ 1]{daduna:87}
and ``(...)\ that introducing state-dependent routing into product
form networks usually destroys the product-form of the steady state
probabilities''~\cite[p.\ 1]{daduna:87}. Product form solutions
under state-dependent routing are, for example, found in~\cite{pittel:79},~\cite{hordijk;dijk:84},~\cite{schassberger:84a}.

Several other stock allocation policies can, for example, be found
in the article of Abouee-Mehrizi and his coauthors~\cite{abouee-mehrizi;baron;berman:14}.
They consider a two-echelon inventory system with a capacitated centralized
production facility and several distribution centres. We will not
go into any greater detail in this allocation policies.

Daduna~\cite[p.\ 624]{daduna:85} and Towsley~\cite[pp.\ 327f.]{towsley:80}
argued that an optimal routing/branching policy for systems with identical
peripheral processors is by intuitive reasoning: ``Customers enter
the peripheral processor with the shortest queue''. This is equivalent
to our strict priorities for load balancing policy.

Chow and Kohler analyse the performance of two-processor distributed
computer systems under several dynamic load balancing policies in~\cite{chow;kohler:77}
and~\cite{chow;kohler:79}. They compare the performance and their
results indicate that a simple load balancing policy can significantly
improve the performance (turnaround time) of the system.
In~\cite{chow;kohler:77} they analyse the performance of homogeneous
(i.e.\ identical) two-processor distributed computer systems under
several dynamic load balancing policies. Their analysis is based on
the recursive solution technique and they illustrated the algorithm
for a sample system~\cite[Appendix, pp.\ 51f.]{chow;kohler:77}.
Their strategy ``join the shorter queue without channel transfer''
in Chow and Kohler's Model B~\cite[pp.\ 42f.]{chow;kohler:77} is
equivalent to our strict load balancing policy. 

Flatto and McKean study also Chow and Kohler's Model B in~\cite{flatto;mckean:77}
and derive by the generating function approach a complicated closed
form solution (cf.~\cite[Section 3, p. 261]{flatto;mckean:77}),
where the inter-arrival times of customers are exponentially distributed
with rate $1$.

Chow and Kohler present in~\cite{chow;kohler:79} a generalization
of the recursive solution technique. They apply the method to non-homogeneous
($=$ heterogeneous) two-processor systems with special properties
in~\cite[Section IV, pp.\ 358f.]{chow;kohler:79} and present a sample
system using the algorithm in~\cite[Appendix, pp.\ 360f.]{chow;kohler:79}.
They mention that the generalization of recursive solution ``(...)\ technique
for three or more processors does not appear to be straightforward''~\cite[p.\ 359]{chow;kohler:79}.\\

\noindent\textbf{Our main contributions} are the following:\\
We analyse an extension of the complex supply chain from \cite{otten;krenzler;daduna:15},
where routing of items depends on the on-hand inventory at the locations (with the aim to obtain "load balancing").
For the system with strict priorities for load balancing policy we develop
a Markov process. We prove that the stationary distribution has a
product form of the marginal distributions of the production subsystem
and of the inventory-replenishment subsystem.
We derive an explicit solution for the marginal distribution of the
production subsystem. Furthermore, 
for the special case with base stock levels equal to one we have derived
an explicit solution for the marginal distribution of the inventory-replenishment subsystem.  
For systems with base stock levels greater than one the marginal
distribution of the inventory-replenishment subsystem with two locations
can be obtained by a recursive method which is described by an algorithm.
On the other hand, our work is an extension of the investigations of Chow and Kohler~\cite{chow;kohler:77,chow;kohler:79}:
Their study is limited to two processors ($=$ our inventories without
production systems). 
For the heterogeneous case, our load balancing policy is slightly
different from that of Chow and Kohler~\cite[Section IV, pp.\ 358f.]{chow;kohler:79}
(for more details see~\prettyref{app:BSN-STAR2-non-identical}). 
Therefore, we construct a new algorithm.\\

\noindent\textbf{Notations and Conventions:}
\begin{itemize}
	\item $\mathbb{N}:=\left\{ 1,2,3,\ldots\right\} $, $\mathbb{N}_{0}:=\{0\}\cup\mathbb{N}$.
	
	\item The vector $\mathbf{0}$ is a row vector of appropriate size with
	all entries equal to $0$. The vector $\mathbf{e}$ is a column vector
	of appropriate size with all entries equal to $1$. The vector $\mathbf{e}_{i}=(0,\ldots,0,\underbrace{1}_{\mathclap{i-\text{th element}}},0,\ldots,0)$
	is a vector of appropriate dimension.
	
	\item The notation $\subset$ between sets means ``subset or equal'' and
	$\subsetneq$ means ``proper subset''.  For
	a set $A$ we denote by $\vert A\vert$ the number of elements in
	$A$.
	
	\item $1_{\left\{ expression\right\} }$ is the indicator function which
	is $1$ if $expression$ is true and $0$ otherwise.
	
	\item Empty sums are 0, and empty products are 1.
	
	\item Throughout this article it is assumed that all random variables are
	defined on a common probability space $(\Omega,{\cal F},P)$. Furthermore,
	by Markov process we mean time-homogeneous continuous-time strong
	Markov process with discrete state space ($=$ Markov jump process).
	Without further mentioning all Markov processes are assumed to be
	regular and have cadlag paths, i.e.\ each path of a process is right-continuous
	and has left limits everywhere. We call a Markov process regular if
	it is non-explosive (i.e.\ the sequence of jump times of the process
	diverges almost surely), its transition intensity matrix is conservative
	(i.e.\ row sums are $0$) and stable (i.e.\ all diagonal elements
	of the transition intensity matrix are finite).
\end{itemize}

\section{Description of the general model\label{sec:BSN-STAR2-modeldescription}}

The supply chain of interest is depicted in \prettyref{fig:BSN-STAR2-figure-base-stock-model}.
We have a set of locations $\Jset:=\left\{ 1,2,\dots,J\right\} $,
$J>1$.\index{JJ@$\Jset$, set of locations}\index{J@$J$, number of locations}
Each of the locations consists of a production system with an attached
inventory. The inventories are replenished by a single central supplier,
which is referred to as workstation $J+1$ and manufactures raw material
for all locations. The items of raw material are indistinguishable
(exchangeable).\\
\begin{figure}[h]
	\centering{}\includegraphics[width=1\textwidth]{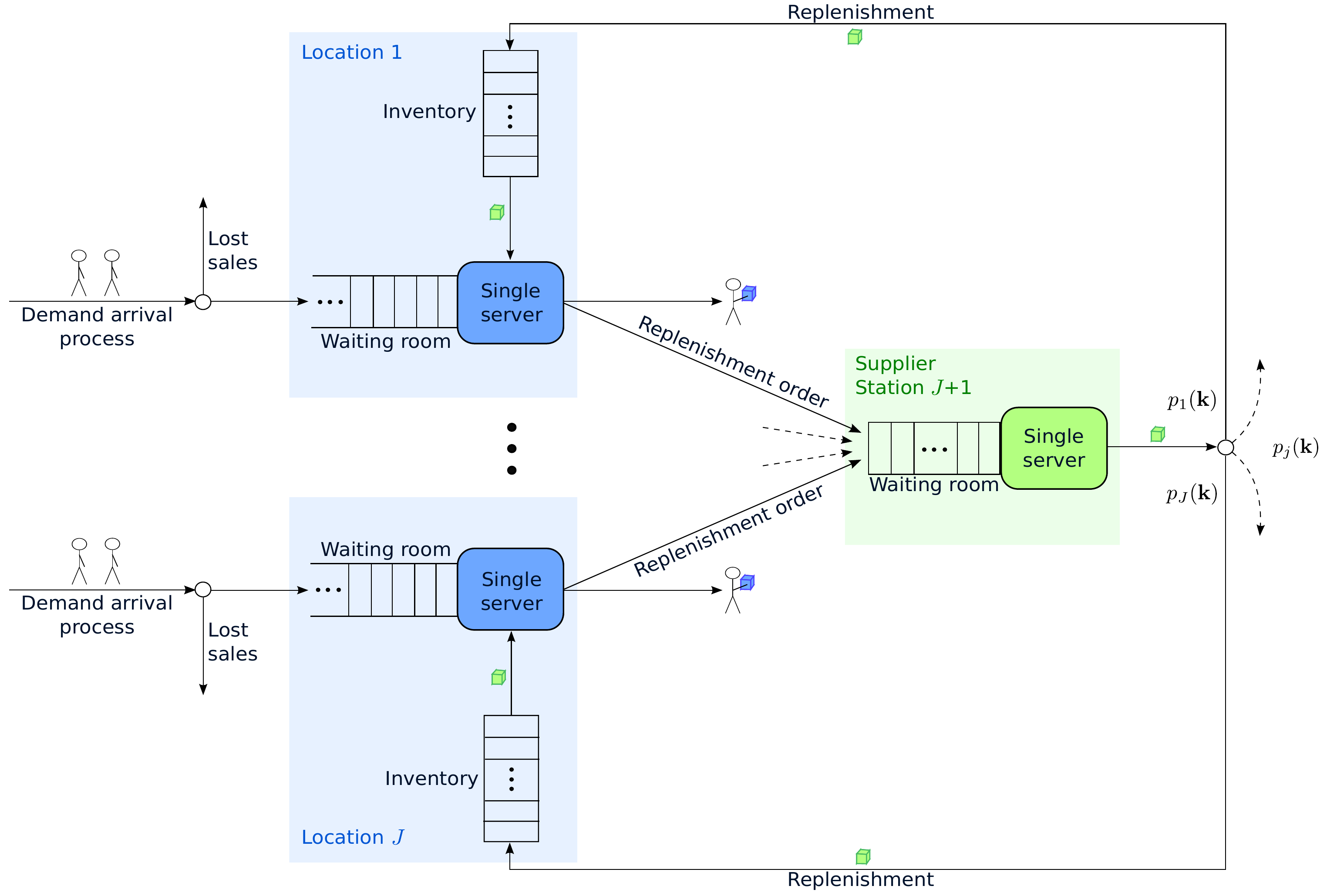}
	\caption{\label{fig:BSN-STAR2-figure-base-stock-model}Supply chain with base
		stock policies and load balancing policies}
\end{figure}

\textbf{Facilities in the supply chain.} Each production system $j\in\Jset$
consists of a single server (machine) with infinite waiting room that
serves customers on a make-to-order basis under a FCFS regime. Customers
arrive one by one at the production system $j$ according to a Poisson
process with rate $\lambda_{j}>0$\index{$\lambda$j@$\lambda_{j}$, arrival rate at location $j$}
and require service. To satisfy a customer's demand the production
system needs exactly one item of raw material, which is taken from
the associated local inventory. When a new customer arrives at a location
while the previous customers' order is not finished, this customer
will wait. If the inventory is depleted at location $j$, the customers
who are already waiting in line will wait, but new arriving customers
at this location will decide not to join the queue and are lost (``local
lost sales'').

The service requests at the locations are exponentially-$1$ distributed.
All service requests constitute an independent family of random variables
which are independent of the arrival streams. The service at location
$j\in\Jset$ is provided with local queue-length-dependent intensity.
If there are $n_{j}>0$\index{nj@$n_{j}$, number of customers present at location $j$}
customers present at location $j$, either waiting or in service (if
any), and if the inventory is not depleted, the service intensity
is $\mu_{j}(n_{j})>0$.\index{$\mu$j@$\mu_{j}(n_{j})$, service intensity if there are $n_{j}$
	customers present at location $j$} If the server is ready to serve a customer who is at the head of
the line, and the inventory is not depleted, the service immediately starts. Otherwise, the
service starts at the instant of time when the next replenishment
arrives at the local inventory.

The inventory at location $j\in\Jset$ is controlled by prescribing
a local base stock level $b_{j}\geq1$,\index{bj@$b_{j}$, base stock level at location $j$}
which is the maximal size of the inventory there, we denote $\textbf{\ensuremath{\bvect}}:=\left(b_{j}:j\in\Jset\right)$.

The central supplier (which is referred to as workstation $J+1$)
consists of a single server (machine) and a waiting room under FCFS
regime. At most $\sum_{j\in\Jset}b_{j}-1$ replenishment orders are
waiting at the central supplier. Service times at the central supplier
are exponentially distributed with parameter $\nu>0$.\index{$\nu$@$\nu$, service rate of the central supplier}\\

\textbf{Routing in the supply chain. }A served customer departs from
the system immediately after service and the associated consumed raw
material is removed from the inventory and an order of one item is
placed at the central supplier at this time instant (``base stock
policy''). 

A finished item of raw material departs from the central supplier
immediately and is sent with probability $p_{j}(\kvect)$\index{pjk@$p_{j}(\kvect)$, state-dependent routing probability to location
	$j$}, independent of the network's history, given $\kvect$, to location
$j$, $j\in\Jset$, if the state of the inventory-replenishment subsystem
is $\kvect$.
We consider the following \textit{load	balancing policy} with strict priorities:
The finished item of raw material is sent to location $j\in\Jset$
with probability
\[
p_{j}(\kvect)=\begin{cases}
1, & \ \text{if }\{j\}=\underset{i\in\Jset}{\arg\max}(b_{i}-k_{i}),\\
\frac{1}{\vert\underset{i\in\Jset}{\arg\max}(b_{i}-k_{i})\vert}<1, & \ \text{if }\{j\}\subsetneq\underset{i\in\Jset}{\arg\max}(b_{i}-k_{i}),\\
0, & \ \text{if }j\notin\underset{i\in\Jset}{\arg\max}(b_{i}-k_{i}),
\end{cases}
\]
i.e.\  to the location(s) with the highest difference between the
on-hand inventory and the capacity of the inventory ($=$ base stock
level), if the inventory is not full at this/these location(s) (this
means that the on-hand inventory level at this/these location(s) is
lower than the base stock level). The routing probabilities out of
the central supplier must sum to one if there is at least one order
at the central supplier.

It is assumed that transmission times for orders are negligible and
set to zero and that transportation times between the central supplier
and the local inventories are negligible.
The usual independence assumptions are assumed to hold as well.\\

\textbf{To obtain a Markovian process} description of the integrated
queueing-inventory system, we denote by $X_{j}(t)$\index{Xj@$X_{j}(t)$, number of customers present at location $j$ at time
	$t\geq0$} the number of customers present at location $j\in\Jset$ at time
$t\geq0$\index{t@$t$, time}, either waiting or in service (queue
length). By $Y_{j}(t)$\index{Yj@$Y_{j}(t)$, size of the inventory at location $j$ at time $t\geq0$}
we denote the size of the inventory at location $j\in\Jset$ at time
$t\geq0$. By $W_{J+1}(t)$\index{Wj@$W_{J+1}(t)$, number of replenishment orders at the central supplier
	at time $t\geq0$} we denote the number of replenishment orders at the central supplier
at time $t\geq0$, either waiting or in service (queue length). 

We define the joint queueing-inventory process of this system by\index{ZZ@$Z$, joint queueing-inventory process}
\[
Z=\left(\left(X_{1}(t),\ldots,X_{J}(t),Y_{1}(t),\ldots,Y_{J}(t),W_{J+1}(t)\right)\,:\,t\geq0\right).
\]
Then, due to the usual independence and memoryless assumptions $Z$
is a homogeneous Markov process, which we assume to be irreducible
and regular. The state space of $Z$ is
\[E=\left\{ \left(\nvect,\kvect\right):\nvect\in\mathbb{N}_{0}^{J},\,\kvect\in K\right\}\]
with
\begin{align*}
K&:=\Big\{(k_{1},\dots,k_{J},k_{J+1})\vert 0\leq k_{j}\leq b_{j},\:j=1,\dots,J,\:k_{J+1}=\sum_{j=1}^{J}(b_{j}-k_{j})\Big\}\subset\mathbb{N}_{0}^{J+1}.
\end{align*}
Note the redundancy in the state space: $W_{J+1}(t)=\sum_{j\in\Jset}b_{j}-\sum_{j\in\Jset}Y_{j}(t)$.
We prefer to carry all information explicitly with because the dynamics
of the system are easier visible.\\

\noindent\textbf{Discussion of the modelling assumptions}\\
We have imposed several simplifying assumptions on the production-inventory-replenishment
system to obtain explicit and simple-to-calculate performance metrics
of the system, which give insights into its long-time and stationary
behaviour. This enables a parametric and sensitivity analysis that
is easy to perform.

First, the assumption of exponentially distributed inter-arrival and
service times are standard in the literature and are the best first-order
approximations. The locally state-dependent service rates are also
common and give quite a bit of flexibility. The lead time is composed
of the waiting time plus the production time at the central supplier.
Therefore, it is more complex than exponential, constant or even zero
lead times (which are often assumed in standard inventory literature).
Zero lead times in our systems would result in almost trivial extensions
of the queueing systems.

Second, we assume that the local base stock levels are  positive (i.e.\ $b_{j}\geq1$
at location $j$). This assumption can be made without loss of generality.
Otherwise, all customers at location $j$ would be lost, which is
the same as excluding location $j$ from the production-inventory-replenishment
system.

Third, the assumption of zero transportation times can be removed
by inserting special (virtual) $M/G/\infty$ workstations into the
network.

\section{Limiting and stationary behaviour \label{sec:stationary-behaviour}}

The queueing-inventory process $Z$ has an infinitesimal generator
$\mathbf{Q}=\left(q(z;\tilde{z}):z,\tilde{z}\in E\right)$ with the
following  transition rates for $(\nvect,\kvect)\in E$:\index{QzQ0@$\mathbf{Q},$ infinitesimal generator}\index{z@$z$, state of the process $Z$}\index{q@$q(\cdot,\cdot),$ transition rates}\label{BSN-STAR-transitionrates}
\begin{align*}
q\left((\nvect,\kvect);(\nvect+\evect_{i},\kvect)\right) & =\lambda_{i}\cdot1_{\left\{ k_{i}>0\right\} }, &  & i\in\Jset,\\
q\left((\nvect,\kvect);(\nvect-\evect_{i},\kvect-\evect_{i}+\evect_{J+1})\right) & =\mu_{i}(n_{i})\cdot1_{\left\{ n_{i}>0\right\} }\cdot1_{\left\{ k_{i}>0\right\} }, &  & i\in\Jset,\\
q\left((\nvect,\kvect);(\nvect,\kvect+\evect_{i}-\evect_{J+1})\right) & =\supplierrate\cdot\routingprob\cdot1_{\left\{ k_{i}<b_{i}\right\} }, &  & i\in\Jset.
\end{align*}
Note that $k_{J+1}>0$ holds if $k_{i}<b_{i}$
for some $i\in\Jset$.

Furthermore, $q(z;\tilde{z})=0$ for any other pair $z\neq\tilde{z}$,
and
\[
q\left(z;z\right)=-\sum_{\substack{\tilde{z}\in E,\\
		z\neq\tilde{z}
	}
}q\left(z;\tilde{z}\right)\qquad\forall z\in E.
\]

\begin{prop}
	\label{prop:BSN-STAR2-limiting-distribution-x}There exists a strictly
	positive measure $\widetilde{\theta}=(\widetilde{\theta}(\kvect):\kvect\in K)$,
	which will be provided below, such that the measure $\mathbf{x}:=\left(x\left(\nvect,\kvect\right):\left(\nvect,\kvect\right)\in E\right)$
	with\index{$\xi$1@$\widetilde{\xi}$, stationary measure for the queues at the
		locations (production systems)}\index{$\xi$j1@$\widetilde{\xi}_{j}(n_{j})$, stationary measure for the
		queue at location (production system) $j$}\index{$\theta$@$\widetilde{\theta}$, stationary measure for the inventory-replenishment
		subsystem}
	\begin{equation}
	x\left(\nvect,\kvect\right)=\widetilde{\xi}(\nvect)\cdot\widetilde{\theta}\left(\kvect\right),\label{eq:BSN-STAR2-stationary-distribution-x}
	\end{equation}
	where
	\begin{equation}
	\widetilde{\xi}(\nvect)=\prod_{j\in\Jset}\widetilde{\xi}_{j}(n_{j}),\qquad\qquad\widetilde{\xi}_{j}(n_{j})=\prod_{\ell=1}^{n_{j}}\frac{\lambda_{j}}{\mu_{j}(\ell)},\quad n_{j}\in\mathbb{N}_{0},\ j\in\Jset,\label{eq:BS-stationary-distribution-pi-load}
	\end{equation}
	solves the global balance equations $\mathbf{x}\cdot\mathbf{Q=0}$
	and is therefore stationary for $Z$. Consequently, $\mathbf{x}$
	is strictly positive.
\end{prop}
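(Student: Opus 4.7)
The plan is to verify the global balance equations $\mathbf x\cdot\mathbf Q=\mathbf 0$ directly by substituting the claimed product form $x(\mathbf n,\mathbf k)=\widetilde\xi(\mathbf n)\widetilde\theta(\mathbf k)$ at each state $(\mathbf n,\mathbf k)\in E$ and exhibiting a decoupling between the queueing and the inventory-replenishment components. The structural reason to expect this is that the queues look like $M/M/1$ systems with state-dependent service whenever $k_i>0$, while the inventory content and the supplier queue exchange items among themselves driven by the same local rates.

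First I would spell out the inflow--outflow equation at state $(\mathbf n,\mathbf k)$ using the three families of transitions listed above: arrivals from $(\mathbf n-\mathbf e_i,\mathbf k)$, service completions from $(\mathbf n+\mathbf e_i,\mathbf k+\mathbf e_i-\mathbf e_{J+1})$, and replenishments from $(\mathbf n,\mathbf k-\mathbf e_i+\mathbf e_{J+1})$. Substituting the product form and dividing through by $\widetilde\xi(\mathbf n)\widetilde\theta(\mathbf k)$, the only ratios of $\widetilde\xi$-values that appear are
\[
\frac{\widetilde\xi(\mathbf n-\mathbf e_i)}{\widetilde\xi(\mathbf n)}=\frac{\mu_i(n_i)}{\lambda_i},\qquad \frac{\widetilde\xi(\mathbf n+\mathbf e_i)}{\widetilde\xi(\mathbf n)}=\frac{\lambda_i}{\mu_i(n_i+1)},
\]
both immediate from \eqref{eq:BS-stationary-distribution-pi-load}. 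These identities are precisely the local balance relations for each location seen in isolation as an $M/M/1$ queue with state-dependent service, and they cause the arrival and service contributions on the two sides of the equation to cancel pairwise for every $i\in\Jset$. The factor $\widetilde\xi(\mathbf n)$ then drops out completely.

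What is left is a relation purely in $\mathbf k$ that coincides with the global balance equation on $K$ for the autonomous Markov process whose only transitions are $\mathbf k\to\mathbf k-\mathbf e_i+\mathbf e_{J+1}$ at rate $\lambda_i\mathbf 1\{k_i>0\}$ and $\mathbf k\to\mathbf k+\mathbf e_i-\mathbf e_{J+1}$ at rate $\nu p_i(\mathbf k)\mathbf 1\{k_i<b_i\}$. This is the inventory-replenishment subsystem in isolation, and $\widetilde\theta$ is to be defined below as its stationary measure; as $K$ is finite and the reduced process inherits irreducibility from $Z$, such a measure exists, is unique up to normalisation and strictly positive, and this last property yields the strict positivity of $\mathbf x$.

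The main obstacle I expect is bookkeeping with the boundary indicators $\mathbf 1\{n_i>0\}$, $\mathbf 1\{k_i>0\}$ and $\mathbf 1\{k_i<b_i\}$: one must check that the ratios $\widetilde\xi(\mathbf n\pm\mathbf e_i)/\widetilde\xi(\mathbf n)$ are invoked only where the adjacent state is genuinely in $E$, and that the cutoffs in the cancelled terms match exactly so that no spurious residue is left on the queueing side when $n_i=0$, $k_i=0$ or $k_i=b_i$. Once this bookkeeping is in order, the reduction to the balance equation for $\widetilde\theta$ is mechanical, and no information about the specific load-balancing probabilities $p_i(\mathbf k)$ is used in this part of the argument --- they enter only through the later determination of $\widetilde\theta$.
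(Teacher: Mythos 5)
Your proposal is correct and follows essentially the same route as the paper: substitute the product form into the global balance equations, use the birth--death identities $\widetilde{\xi}_{i}(n_{i}-1)\lambda_{i}=\widetilde{\xi}_{i}(n_{i})\mu_{i}(n_{i})$ and $\widetilde{\xi}_{i}(n_{i}+1)\mu_{i}(n_{i}+1)=\widetilde{\xi}_{i}(n_{i})\lambda_{i}$ to cancel the queueing factors, and reduce to the balance equation of exactly the same irreducible ``reduced'' Markov process on the finite set $K$, whose unique (up to a constant) strictly positive stationary measure defines $\widetilde{\theta}$. The only quibble is your phrase that arrival and service contributions ``cancel pairwise'': in fact only the service terms cancel outright, while the arrival terms survive and are reinterpreted as the transition rates $\lambda_{i}1_{\{k_{i}>0\}}$ of the reduced inventory-replenishment chain, which is what your final displayed generator already reflects.
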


\begin{proof}
	\textbf{The global balance equations} $\mathbf{x}\cdot\mathbf{Q=0}$
	of the stochastic queueing-inventory process $Z$ are given for $\left(\nvect,\kvect\right)\in E$
	by
	\begin{align*}
	& \phantomeq x\left(\nvect,\kvect\right)\cdot\Big(\sum_{i\in\Jset}\lambda_{i}\cdot1_{\left\{ k_{i}>0\right\} }+\sum_{i\in\Jset}\mu_{i}(n_{i})\cdot1_{\left\{ n_{i}>0\right\} }\cdot1_{\left\{ k_{i}>0\right\} }+\sum_{i\in\Jset}\supplierrate\cdot p_{i}(\kvect)\cdot1_{\left\{ k_{i}<b_{i}\right\} }\Big)\\
	& =\sum_{i\in\Jset}x\left(\nvect-\evect_{i},\kvect\right)\cdot\lambda_{i}\cdot1_{\left\{ n_{i}>0\right\} }\cdot1_{\left\{ k_{i}>0\right\} }\\
	& \phantomeq+\sum_{i\in\Jset}x\left(\nvect+\evect_{i},\kvect+\evect_{i}-\evect_{J+1}\right)\cdot\mu_{i}(n_{i}+1)\cdot1_{\left\{ k_{i}<b_{i}\right\} }\\
	& \phantomeq+\sum_{i\in\Jset}x\left(\nvect,\kvect-\evect_{i}+\evect_{J+1}\right)\cdot\supplierrate\cdot p_{i}(\kvect-\evect_{i}+\evect_{J+1})\cdot1_{\left\{ k_{i}>0\right\} }.
	\end{align*}
	It has to be shown that the stationary measure from \prettyref{prop:BSN-STAR2-limiting-distribution-x}
	satisfies these global balance equations. 
	Substitution of~\prettyref{eq:BSN-STAR2-stationary-distribution-x} and~\prettyref{eq:BS-stationary-distribution-pi-load}
	into the global balance equations directly leads to
	\begin{align*}
	& \phantomeq\left(\prod_{j\in\Jset}\widetilde{\xi}_{j}(n_{j})\right)\cdot\widetilde{\theta}\left(\kvect\right)\\
	& \phantomeq\cdot\Big(\sum_{i\in\Jset}\lambda_{i}\cdot1_{\left\{ k_{i}>0\right\} }+\sum_{i\in\Jset}\mu_{i}(n_{i})\cdot1_{\left\{ n_{i}>0\right\} }\cdot1_{\left\{ k_{i}>0\right\} }+\sum_{i\in\Jset}\supplierrate\cdot p_{i}\cdot1_{\left\{ k_{i}<b_{i}\right\} }\Big)\\
	& =\sum_{i\in\Jset}\left(\prod_{{j\in\Jset\backslash\left\{ i\right\} }}\widetilde{\xi}_{j}(n_{j})\right){\widetilde{\xi}_{i}(n_{i}-1)}\cdot\widetilde{\theta}\left(\kvect\right)\cdot{\lambda_{i}}\cdot1_{\left\{ n_{i}>0\right\} }\cdot1_{\left\{ k_{i}>0\right\} }\\
	& \phantomeq+\sum_{i\in\Jset}\left(\prod_{j\in\Jset\backslash\left\{ i\right\} }\widetilde{\xi}_{j}(n_{j})\right)\cdot\widetilde{\xi}_{i}(n_{i}+1)\cdot\widetilde{\theta}\left(\kvect+\evect_{i}-\evect_{J+1}\right)\cdot\mu_{i}(n_{i}+1)\cdot1_{\left\{ k_{i}<b_{i}\right\} }\\
	& \phantomeq+\sum_{i\in\Jset}\left(\prod_{j\in\Jset}\widetilde{\xi}_{j}(n_{j})\right)\cdot\widetilde{\theta}\left(\kvect-\evect_{i}+\evect_{J+1}\right)\cdot\supplierrate\cdot p_{i}\cdot1_{\left\{ k_{i}>0\right\} }.
	\end{align*}
	By substitution of~\prettyref{eq:BS-stationary-distribution-pi-load}
	we obtain
	\begin{align*}
	& \phantomeq\left(\prod_{j\in\Jset}\widetilde{\xi}_{j}(n_{j})\right)\cdot\widetilde{\theta}\left(\kvect\right)\\
	& \phantomeq\cdot\Big(\sum_{i\in\Jset}\lambda_{i}\cdot1_{\left\{ k_{i}>0\right\} }+\sum_{i\in\Jset}\mu_{i}(n_{i})\cdot1_{\left\{ n_{i}>0\right\} }\cdot1_{\left\{ k_{i}>0\right\} }+\sum_{i\in\Jset}\supplierrate\cdot p_{i}\cdot1_{\left\{ k_{i}<b_{i}\right\} }\Big)\\
	& =\sum_{i\in\Jset}\left(\prod_{j\in\Jset}\widetilde{\xi}_{j}(n_{j})\right)\cdot\widetilde{\theta}\left(\kvect\right)\cdot\mu_{i}(n_{i})\cdot1_{\left\{ n_{i}>0\right\} }\cdot1_{\left\{ k_{i}>0\right\} }\\
	& \phantomeq+\sum_{i\in\Jset}\left(\prod_{j\in\Jset}\widetilde{\xi}_{j}(n_{j})\right)\cdot\widetilde{\theta}\left(\kvect+\evect_{i}-\evect_{J+1}\right)\cdot\lambda_{i}\cdot1_{\left\{ k_{i}<b_{i}\right\} }\\
	& \phantomeq+\sum_{i\in\Jset}\left(\prod_{j\in\Jset}\widetilde{\xi}_{j}(n_{j})\right)\cdot\widetilde{\theta}\left(\kvect-\evect_{i}+\evect_{J+1}\right)\cdot\supplierrate\cdot p_{i}\cdot1_{\left\{ k_{i}>0\right\} }.
	\end{align*}
	Cancelling $\left(\prod_{j\in\Jset}\widetilde{\xi}_{j}(n_{j})\right)$
	and the sums with the terms $\mu_{i}(n_{i})\cdot1_{\left\{ n_{i}>0\right\} }\cdot1_{\left\{ k_{i}>0\right\} }$
	on both sides of the equation leads to
	\begin{align}
	& \phantomeq\widetilde{\theta}\left(\kvect\right)\Big(\sum_{i\in\Jset}\lambda_{i}\cdot1_{\left\{ k_{i}>0\right\} }+\sum_{i\in\Jset}\supplierrate\cdot p_{i}(\kvect)\cdot1_{\left\{ k_{i}<b_{i}\right\} }\Big)\nonumber \\
	& =\sum_{i\in\Jset}\widetilde{\theta}\left(\kvect+\evect_{i}-\evect_{J+1}\right)\cdot\lambda_{i}\cdot1_{\left\{ k_{i}<b_{i}\right\} }\nonumber \\
	& \phantomeq+\sum_{i\in\Jset}\widetilde{\theta}\left(\kvect-\evect_{i}+\evect_{J+1}\right)\cdot\supplierrate\cdot p_{i}(\kvect-\evect_{i}+\evect_{J+1})\cdot1_{\left\{ k_{i}>0\right\} }.\qquad\label{eq:BSN-STAR2-theta-equation}
	\end{align}
	
	An inspection of the system~\prettyref{eq:BSN-STAR2-theta-equation}
	reveals that it is a ``generator equation'', i.e.\ the global balance
	equation $\widetilde{\theta}\cdot\mathbf{Q}_{red}=0$ for a suitably
	defined ergodic Markov process on state space $K$ with ``reduced
	generator'' $\mathbf{Q}_{red}=\left(q_{red}(\kvect;\widetilde{\kvect}):\kvect,\widetilde{\kvect}\in K\right)$\index{QzQred@$\mathbf{Q}_{red}$, reduced generator}
	with the following  transition rates for $\kvect\in K$:
	\begin{align*}
	q_{red}\left(\kvect;\kvect-\evect_{i}+\evect_{J+1}\right) & =\lambda_{i}\cdot1_{\left\{ k_{i}>0\right\} }, &  & i\in\Jset,\\
	q_{red}\left(\kvect;\kvect+\evect_{i}-\evect_{J+1}\right) & =\supplierrate\cdot\routingprob\cdot1_{\left\{ k_{i}<b_{i}\right\} }, &  & i\in\Jset.
	\end{align*}
	The Markov process generated by $\mathbf{Q}_{red}$ is irreducible
	on $K$ and therefore \prettyref{eq:BSN-STAR2-theta-equation} has
	a solution which is unique up to a multiplicative constant, which
	yields $\widetilde{\theta}$.
\end{proof}
\begin{rem}
	In \prettyref{subsec:BS-State-homo-hetero-theta}, the marginal measure
	$\widetilde{\theta}$ is derived in explicit form for the special
	case with base stock levels $b_{j}=1$, $j\in\Jset$.
	For systems with two locations and base stock levels greater than
	one ($b_{j}>1$, $j\in\Jset$), the marginal measure $\widetilde{\theta}$
	can be obtained by a recursive method which is described by the algorithm
	given in \prettyref{app:BSN-STAR2-iterative-algorithm}.
\end{rem}

Recall that the system is irreducible and regular. Therefore, if $Z$
has a stationary and limiting distribution, this is uniquely defined.
\begin{definition}
	For the queueing-inventory process $Z$ on state space $E$, whose
	limiting distribution exists, we define
	\[
	\pi:=\left(\pi\left(\nvect,\kvect\right):\left(\nvect,\kvect\right)\in E\right),\quad\pi\left(\nvect,\kvect\right):=\lim_{t\rightarrow\infty}P\left(Z(t)=(\nvect,\kvect)\right)
	\]
	and the appropriate marginal distributions
	\[
	\xi:=\left(\xi\left(\nvect\right):\nvect\in\mathbb{N}_{0}^{J}\right),\quad\xi\left(\nvect\right):=\lim_{t\rightarrow\infty}P\left(\left(X_{1}(t),\ldots,X_{J}(t)\right)=\nvect\right),
	\]
	\[
	\theta:=\left(\theta\left(\kvect\right):\kvect\in K\right),\quad\theta\left(\kvect\right):=\lim_{t\rightarrow\infty}P\left(\left(Y_{1}(t),\ldots,Y_{J}(t),W_{J+1}(t)\right)=\kvect\right).
	\]
\end{definition}

\begin{thm}\label{thm:BSN-star2-ergo}
	The queueing-inventory process $Z$ is
	ergodic if and only if for $j\in\Jset$
	\[
	\sum_{n_{j}\in\mathbb{N}_{0}}\prod_{\ell=1}^{n_{j}}\frac{\lambda_{j}}{\mu_{j}(\ell)}<\infty.
	\]
	If $Z$ is ergodic, then its unique limiting and stationary distribution
	is\index{$\pi$@$\pi$, limiting and stationary distribution of $Z$}
	\begin{equation}
	\pi\left(\nvect,\kvect\right)=\xi(\nvect)\cdot\theta\left(\kvect\right),\label{eq:BSN-STAR2-stationary-distribution}
	\end{equation}
	with\index{$\xi$@$\xi$, stationary marginal distributions of the queues at
		the locations (production systems)}\index{$\xi$j@$\xi_{j}(n_{j})$, stationary distribution of the queue at
		location (production system) $j$}\index{$\theta$@$\theta$, stationary marginal distribution of the inventory-replenishment
		subsystem}
	\begin{equation}
	\xi(\nvect)=\prod_{j\in\Jset}\xi_{j}(n_{j}),\qquad\qquad\xi_{j}(n_{j})=C_{j}^{-1}\prod_{\ell=1}^{n_{j}}\frac{\lambda_{j}}{\mu_{j}(\ell)},\quad n_{j}\in\mathbb{N}_{0},\ j\in\Jset,\label{eq:BSN-STAR2-stationary-pi}
	\end{equation}
	and normalisation constants\index{CzCj@$C_{j}$, normalization constant}
	\[
	C_{j}=\sum_{n_{j}\in\mathbb{N}_{0}}\prod_{\ell=1}^{n_{j}}\frac{\lambda_{j}}{\mu_{j}(\ell)}
	\]
	and $\theta$ is the probabilistic solution of \prettyref{eq:BSN-STAR2-theta-equation}.
\end{thm}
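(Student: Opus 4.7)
The plan is to combine \prettyref{prop:BSN-STAR2-limiting-distribution-x} with standard Markov chain theory: the proposition already provides a strictly positive stationary measure $\mathbf{x}$ of product form, so essentially all that remains is a normalisation analysis. Since $Z$ is assumed irreducible and regular, a classical criterion states that $Z$ is positive recurrent (equivalently ergodic) if and only if its stationary measure is summable, in which case the unique stationary distribution is obtained by normalising that measure. This reduces the theorem to a summability check on $\mathbf{x}$.

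First I would observe that the inventory-replenishment state space $K$ is finite, because $0\le k_{j}\le b_{j}$ for $j\in\Jset$ and $k_{J+1}$ is then determined by the constraint $k_{J+1}=\sum_{j\in\Jset}(b_{j}-k_{j})$. Consequently $\sum_{\kvect\in K}\widetilde{\theta}(\kvect)$ is a finite strictly positive number. Thus summability of $\mathbf{x}$ over $E$ is equivalent to summability of $\widetilde{\xi}$ over $\mathbb{N}_{0}^{J}$, and by the product structure from \prettyref{eq:BS-stationary-distribution-pi-load} we have
\[
\sum_{\nvect\in\mathbb{N}_{0}^{J}}\widetilde{\xi}(\nvect)=\prod_{j\in\Jset}\sum_{n_{j}\in\mathbb{N}_{0}}\prod_{\ell=1}^{n_{j}}\frac{\lambda_{j}}{\mu_{j}(\ell)}=\prod_{j\in\Jset}C_{j}.
\]
This product is finite if and only if each factor $C_{j}$ is finite, giving the stated ergodicity criterion.

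Second, under this condition I would define $\pi(\nvect,\kvect):=\mathbf{x}(\nvect,\kvect)/\big(\sum_{(\mvect,\widetilde{\kvect})\in E}\mathbf{x}(\mvect,\widetilde{\kvect})\big)$ and $\theta(\kvect):=\widetilde{\theta}(\kvect)/\sum_{\widetilde{\kvect}\in K}\widetilde{\theta}(\widetilde{\kvect})$. Because $\mathbf{x}$ factorises, normalisation factorises as well, yielding $\pi(\nvect,\kvect)=\xi(\nvect)\cdot\theta(\kvect)$ with $\xi_{j}(n_{j})=C_{j}^{-1}\prod_{\ell=1}^{n_{j}}\lambda_{j}/\mu_{j}(\ell)$. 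Uniqueness of the stationary distribution of an irreducible positive recurrent Markov chain, together with the identification $\pi=\lim_{t\to\infty}P(Z(t)=\cdot)$ (which holds since $Z$ is irreducible, positive recurrent and continuous-time, hence aperiodic in the relevant sense), yields~\prettyref{eq:BSN-STAR2-stationary-distribution}.

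There is no real obstacle here; the heavy lifting — verifying the global balance equations for the product measure and deriving the reduced generator equation for $\widetilde{\theta}$ — is already done in \prettyref{prop:BSN-STAR2-limiting-distribution-x}. The only points requiring care are (i) explicitly noting finiteness of $K$ so that summability of $\widetilde{\theta}$ is automatic, and (ii) correctly invoking the equivalence ``irreducible regular Markov process admits a finite stationary measure $\iff$ positive recurrent $\iff$ ergodic'' so as to cleanly deduce both the ``only if'' and ``if'' directions simultaneously.
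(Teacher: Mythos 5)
Your proposal is correct and follows essentially the same route as the paper: it invokes \prettyref{prop:BSN-STAR2-limiting-distribution-x} for the strictly positive product-form stationary measure, reduces ergodicity to normalisability, uses finiteness of $K$ together with the factorisation $\sum_{\nvect}\widetilde{\xi}(\nvect)=\prod_{j\in\Jset}C_{j}$ to obtain the criterion, and then normalises to get \prettyref{eq:BSN-STAR2-stationary-distribution}. The extra remarks on uniqueness and the limit identification are consistent with, though slightly more explicit than, the paper's argument.
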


\begin{proof}
	$Z$ is ergodic if and only if the strictly positive measure $\mathbf{x}$
	of the global balance equation $\mathbf{x}\cdot\mathbf{Q}=\mathbf{0}$
	from \prettyref{prop:BSN-STAR2-limiting-distribution-x} can be normalised
	(i.e.\ $\sum_{\nvect\in\mathbb{N}_{0}}\sum_{\kvect\in K}x(\nvect,\kvect)<\infty$).
	Because of \prettyref{prop:BSN-STAR2-limiting-distribution-x} it
	holds
	\[
	\sum_{\nvect\in\mathbb{N}_{0}}\sum_{\kvect\in K}x(\nvect,\kvect)=\sum_{\nvect\in\mathbb{N}_{0}}\widetilde{\xi}(\nvect)\cdot\sum_{\kvect\in K}\widetilde{\theta}\left(\kvect\right)=\left(\prod_{j\in\Jset}\sum_{n_{j}\in\mathbb{N}_{0}}\prod_{\ell=1}^{n_{j}}\frac{\lambda_{j}}{\mu_{j}(\ell)}\right)\cdot\sum_{\kvect\in K}\widetilde{\theta}\left(\kvect\right).
	\]
	Hence, since $K$ is finite, the measure $\mathbf{x}$ from \prettyref{prop:BSN-STAR2-limiting-distribution-x}
	can be normalised if and only if $\sum_{n_{j}\in\mathbb{N}_{0}}\prod_{\ell=1}^{n_{j}}\frac{\lambda_{j}}{\mu_{j}(\ell)}<\infty$
	for all $j\in\Jset$.
	
	Consequently, if the process is ergodic, the limiting and stationary
	distribution $\pi$ is given by
	\[
	\pi(\nvect,\kvect)=\frac{x(\nvect,\kvect)}{\sum_{\nvect\in\mathbb{N}_{0}}\sum_{\kvect\in K}x(\nvect,\kvect)},
	\]
	where $x(\nvect,\kvect)$ is given in \prettyref{prop:BSN-STAR2-limiting-distribution-x}.
\end{proof}
\begin{rem}
	\label{rem:remark-pf}The expression \prettyref{eq:BSN-STAR2-stationary-distribution}
	shows that the two-component production-inventory-replenishment system
	is separable, the steady states of the production network and the
	inventory-replenishment complex decouple asymptotically.
	
	Representation~\prettyref{eq:BSN-STAR2-stationary-pi} shows that
	the equilibrium for the production subsystem decomposes in true independent
	coordinates. A product structure of the stationary distribution as
	\[
	\xi(\nvect)=\prod_{j\in\Jset}\xi_{j}(n_{j})=\prod_{j\in\Jset}C_{j}^{-1}\prod_{\ell=1}^{n_{j}}\frac{\lambda_{j}}{\mu_{j}(\ell)}
	\]
	is commonly found for standard Jackson networks \cite{jackson:57}
	and their relatives. In Jackson networks servers are ``non-idling'',
	i.e.\ they are always busy as long as customers are present at the
	respective node. In our production network, however, servers may be
	idle while there are customers waiting because a replenishment needs
	to arrive first. Consequently, the product form~\prettyref{eq:BSN-STAR2-stationary-distribution}
	has been unexpected to us.
	
	Our production-inventory-replenishment system can be considered as a ``Jackson
	network in a random environment'' in~\cite[Section 4]{krenzler;daduna;otten:16}.
	We can interpret the inventory-replenishment subsystem, which contributes
	via $\theta$ to \prettyref{thm:BSN-star2-ergo}, as a ``random environment''
	for the production network of nodes $\Jset$, which is a Jackson network
	of parallel servers.
	Taking into account the results of~\cite[Theorem 4.1]{krenzler;daduna;otten:16}
	we conclude from the hindsight that decoupling of the queueing process
	$\left(X_{1},\dots,X_{J}\right)$ and the process $\left(Y_{1},\dots,Y_{J},W_{J+1}\right)$,
	i.e.\ the formula~\prettyref{eq:BSN-STAR2-stationary-distribution},
	is a consequence of that Theorem 4.1. 
	
	Our direct proof of \prettyref{prop:BSN-STAR2-limiting-distribution-x}
	is much shorter than embedding the present model into the general
	framework of~\cite{krenzler;daduna;otten:16}.
\end{rem}

\noindent\textbf{Structural properties of the integrated system\label{sect:structureinf}}\\
The investigations in this section rely on the fact that the product
form of the stationary distribution ($=$ separability) makes structures
easily visible that are hard to detect by simulations or by direct
numerical investigations. As a byproduct we demonstrate the power
of product form calculus.

\noindent\textit{Ergodicity:\label{subsec:BS-structure-ergodicity}}
As shown in \prettyref{thm:BSN-star2-ergo}, ergodicity
is determined by $\sum_{n_{j}\in\mathbb{N}_{0}}\prod_{\ell=1}^{n_{j}}\frac{\lambda_{j}}{\mu_{j}(\ell)}<\infty$,
$j\in\Jset$, because $K$ is finite. Hence, ergodicity is determined
by the parameters of the isolated queueing system without the inventory
system at the locations. For instance, if ${\mu_{j}(\ell)}=\mu_{j},\forall\ell$,
then $\lambda_{j}<\mu_{j},\forall j$, is the correct condition for
stabilizing the entire system.

Noteworthy is that the extra idle times of the servers at the production
systems do not destroy ergodicity due to the necessary replenishments.
The reason behind this is that the local lost sales at the individual
servers at these locations automatically balance a possible bottleneck
behaviour of the central supplier.

\noindent\textit{Insensitivity and robustness:\label{subsec:BS-structure-insensitivity-and-robustness}}
Sensitivity analysis is an important topic in classical inventory
theory and is often hard to perform. In our model the stationary distribution
$\pi\left(\nvect,\kvect\right)=\left(\prod_{j\in\Jset}\xi_{j}(n_{j})\right)\cdot\theta\left(\kvect\right)$
reveals strong insensitivity properties of the system which make sensitivity
analysis amenable: The steady state behaviour of the subnetwork consisting
of inventories and the central supplier does not change when the service
rates at the locations are changed as long as the global system remains
ergodic. Therefore $\theta$ is robust against estimation errors in
determining the $\mu_{j}(\cdot)$. Vice versa, the distribution $\xi_{j}(n_{j})$
is robust against changes in the inventory-replenishment network as long
as the demand intensity and the service rates are maintained.

\begin{rem}
	A cost analysis can be performed as for the model without load balancing in \cite[pp.~11f.]{otten;krenzler;daduna:15}. Note a corrected version of the definition of the cost function can be found in \cite[Section 2.5, pp.~25f.]{otten:17}.
\end{rem}

\subsection{Calculation of $\widetilde{\theta}$}
\subsubsection{Special case: $b_{j}=1$,	$j\in\protect\Jset$\label{subsec:BS-State-homo-hetero-theta}}

In this section, we will solve the global balance equation $\widetilde{\theta}\cdot\mathbf{Q}_{red}=\mathbf{0}$
for the special case with base stock levels $b_{j}=1$, $j\in\Jset$. In this special case, our model with strict priorities is identical to the model with weak priorities in \cite[Section 3.4]{otten:17}, where the finished item of raw material is sent with greater probability to the location with higher difference between the on-hand inventory and the capacity of the inventory. For the sake of completeness we present the proof here.

We recall the notation for the inventory-replenishment subsystem
\[
\widetilde{\theta}(\overbrace{k_{1},k_{2},\ldots,k_{J}}^{\substack{\text{inventories}\\
		\text{at locations}
	}
},\overbrace{k_{J+1}}^{\text{supplier}}).
\]
Furthermore, in this special case it holds $p_{i}(\kvect)=\frac{1}{J-\sum_{j\in\Jset}k_j}$ if $k_i=0$, $i\in \Jset$.

\begin{prop}
	\label{prop:BS-STATE-homo-hetero-theta}The strictly positive measure
	$\widetilde{\theta}=\left(\widetilde{\theta}(\kvect):\kvect\in K\right)$
	of the inventory-replenishment subsystem with base stock levels $b_{1}=\cdots=b_{J}=1$
	is given by
	\begin{align}
	\widetilde{\theta}(\kvect)=\widetilde{\theta}(k_{1},k_{2},\ldots,k_{J},k_{J+1}) & 
	=\left(\prod_{\ell=0}^{(\sum_{j\in\Jset}k_{j})-1}\frac{1}{J-\ell}\right)
	\cdot \left(\prod_{j\in \Jset}\left(\frac{1}{\lambda_j}\right)^{k_{j}}\right)
	\cdot \left(\frac{1}{\nu}\right)^{k_{J+1}}
	\label{eq:BS-homo-j-bsp-00}
	\end{align}
\end{prop}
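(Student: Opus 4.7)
The plan is to verify the formula by showing that it satisfies detailed balance with respect to $\mathbf{Q}_{red}$. Since $\mathbf{Q}_{red}$ is irreducible on the finite set $K$, its stationary measure is unique up to a multiplicative constant (as noted at the end of the proof of \prettyref{prop:BSN-STAR2-limiting-distribution-x}); consequently any strictly positive measure satisfying detailed balance must be (proportional to) $\widetilde{\theta}$. Detailed balance is the natural route here because under the constraint $b_j = 1$ the state space $K$ is in bijection with the subsets of $\Jset$ (encoded by $\{j : k_j = 1\}$, with $k_{J+1} = J - \sum_j k_j$), and the only transitions of $\mathbf{Q}_{red}$ flip a single coordinate $k_i$ together with a compensating change in $k_{J+1}$. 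Hence pairwise balance between $\kvect$ and $\kvect + \evect_i - \evect_{J+1}$ exhausts the balance condition.

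Let $s := \sum_{j \in \Jset} k_j$. For any $\kvect \in K$ with $k_i = 0$, I would identify the two rates between $\kvect$ and $\kvect' := \kvect + \evect_i - \evect_{J+1}$:
\[
q_{red}(\kvect;\kvect') = \nu \cdot p_i(\kvect) = \frac{\nu}{J-s}, \qquad q_{red}(\kvect';\kvect) = \lambda_i,
\]
using that $\arg\max_{j}(b_j - k_j) = \{j : k_j = 0\}$, a set of cardinality $J-s$, so that $p_i(\kvect) = 1/(J-s)$ whenever $k_i = 0$.

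Next I would compute the ratio $\widetilde{\theta}(\kvect')/\widetilde{\theta}(\kvect)$ directly from the proposed formula. When $s$ increases to $s+1$, the combinatorial product $\prod_{\ell=0}^{s-1}\frac{1}{J-\ell}$ gains the extra factor $\frac{1}{J-s}$, the $\lambda$-product gains a factor $\lambda_i^{-1}$, and the $\nu$-factor gains one power of $\nu$ since $k'_{J+1} = k_{J+1}-1$. Thus
\[
\frac{\widetilde{\theta}(\kvect')}{\widetilde{\theta}(\kvect)} = \frac{\nu}{(J-s)\,\lambda_i},
\]
which is precisely the ratio required by detailed balance $\widetilde{\theta}(\kvect)\, q_{red}(\kvect;\kvect') = \widetilde{\theta}(\kvect')\, q_{red}(\kvect';\kvect)$.

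I do not anticipate any substantive obstacle. The only care required is the bookkeeping of the three multiplicative factors when $s$ changes by one, together with the observation that every transition in $\mathbf{Q}_{red}$ is of the form considered above, so the single pairwise check suffices. Strict positivity of the measure is immediate from the closed-form expression since $\lambda_j, \nu > 0$ and $J - \ell \geq 1$ for $\ell \leq s - 1 \leq J - 1$.
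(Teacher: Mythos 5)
Your proposal is correct, but it packages the verification differently from the paper. The paper's proof substitutes the closed form \prettyref{eq:BS-homo-j-bsp-00} into the full global balance equations $\widetilde{\theta}\cdot\mathbf{Q}_{red}=\mathbf{0}$ and shows that the inflow sum reduces term by term to the outflow sum; you instead observe that every transition of $\mathbf{Q}_{red}$ in the case $b_j=1$ connects a pair $\kvect$, $\kvect+\evect_i-\evect_{J+1}$, check the single pairwise relation
\[
\widetilde{\theta}(\kvect)\cdot\frac{\nu}{J-s}=\widetilde{\theta}(\kvect+\evect_i-\evect_{J+1})\cdot\lambda_i ,\qquad s=\sum_{j\in\Jset}k_j,\ k_i=0,
\]
and invoke the standard fact that detailed balance implies global balance. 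The ratio computation $\widetilde{\theta}(\kvect')/\widetilde{\theta}(\kvect)=\nu/((J-s)\lambda_i)$ is exactly the identity the paper exploits (its term-by-term cancellation \emph{is} the collection of detailed balance relations), so the arithmetic content is the same; what your framing buys is the additional structural conclusion that the reduced inventory-replenishment chain is reversible when all base stock levels equal one, and a shorter write-up since one two-state check replaces the full substitution. Two minor remarks: the appeal to uniqueness of the stationary measure is not needed for the proposition as stated (exhibiting one strictly positive solution of the balance equations suffices, since $\widetilde{\theta}$ is only determined up to a multiplicative constant anyway); and it is worth saying explicitly, as you implicitly do, that the vector increment $\evect_i-\evect_{J+1}$ determines $i$ uniquely, so no two distinct transitions connect the same pair of states and the pairwise bookkeeping is unambiguous.
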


\begin{rem}
	As we have mentioned before  \prettyref{eq:BSN-STAR2-theta-equation} has a solution which is unique up to a multiplicative constant, which
	yields $\widetilde{\theta}$. In \cite[Proposition 3.3.6, p.~50]{otten:17} another stationary measure $\widetilde{\theta}_1$ is presented which differs from our stationary measure up to a multiplicative constant, i.e.~ $\widetilde{\theta}=\left(\tfrac{1}{\nu}\right)^{J}\cdot \widetilde{\theta}_1$.
\end{rem}
\begin{proof}
	It has to be shown that the stationary measure \prettyref{eq:BS-homo-j-bsp-00}
	satisfies the global balance equations $\widetilde{\theta}\cdot\mathbf{Q}_{red}=\mathbf{0}$,
	which are given for $\kvect\in K$ by
	\begin{align*}
	& \phantomeq\widetilde{\theta}\left(\kvect\right)\cdot\Big(\sum_{i\in\Jset}\lambda_{i}\cdot 1_{\left\{ k_{i}=1\right\} }
	+\sum_{i\in\Jset}\supplierrate\cdot p_{i}(\kvect)\cdot 1_{\left\{ k_{i}=0\right\} }\Big)\\
	& =\sum_{i\in\Jset}\widetilde{\theta}\left(\kvect+\evect_{i}-\evect_{J+1}\right)\cdot\lambda_{i}\cdot1_{\left\{ k_{i}=0\right\} }\\
	& \phantomeq+\sum_{i\in\Jset}\widetilde{\theta}\left(\kvect-\evect_{i}+\evect_{J+1}\right)\cdot\supplierrate\cdot p_{i}(\kvect-\evect_{i}+\evect_{J+1})\cdot1_{\left\{ k_{i}=1\right\} }
	\\
	\Leftrightarrow\quad &
	\phantomeq\widetilde{\theta}\left(\kvect\right)\cdot\Big(\sum_{i\in\Jset}\lambda_{i}\cdot 1_{\left\{ k_{i}=1\right\} }
	+\sum_{i\in\Jset}\supplierrate\cdot p_{i}(\kvect)\cdot 1_{\left\{ k_{i}=0\right\} }\Big)\\
	& =\sum_{i\in\Jset}
	\underbrace{\left(\prod_{\ell=0}^{(\sum_{j\in\Jset}k_{j})-1}\frac{1}{J-\ell}\right)
		\cdot \left(\prod_{j\in \Jset}\left(\frac{1}{\lambda_j}\right)^{k_{j}}\right)
		\cdot \left(\frac{1}{\nu}\right)^{k_{J+1}}}_{\widetilde{\theta}\left(\kvect\right)}\\
	&\cdot \underbrace{\frac{1}{J-\sum_{j\in \Jset}k_{j}}}_{=p_i(\kvect)}
	\cdot \frac{1}{\lambda_{i}}\cdot \nu
	\cdot\lambda_{i}\cdot1_{\left\{ k_{i}=0\right\} }\\
	& \phantomeq+\sum_{i\in\Jset}
	\underbrace{\left(\prod_{\ell=0}^{(\sum_{j\in\Jset}k_{j})-1}\frac{1}{J-\ell}\right)
		\cdot \left(\prod_{j\in \Jset}\left(\frac{1}{\lambda_j}\right)^{k_{j}}\right)
		\cdot \left(\frac{1}{\nu}\right)^{k_{J+1}}}_{\widetilde{\theta}\left(\kvect\right)}\\
	&\cdot \underbrace{\left(J+1-\sum_{j\in\Jset}k_{j}\right)}_{=\frac{1}{p_{i}(\kvect-\evect_{i}+\evect_{J+1})}} 
	\cdot \lambda_i \cdot \frac{1}{\nu}
	\cdot\supplierrate\cdot p_{i}(\kvect-\evect_{i}+\evect_{J+1})\cdot1_{\left\{ k_{i}=1\right\} }
	\end{align*}
	The right hand-side of the last equation is 
	\[
	\sum_{i\in\Jset}\widetilde{\theta}\left(\kvect\right)\supplierrate\cdot p_{i}(\kvect)\cdot 1_{\left\{ k_{i}=0\right\}} 
	+
	\sum_{i\in\Jset}\widetilde{\theta}\left(\kvect\right) \lambda_{i}\cdot 1_{\left\{ k_{i}=1\right\}}.
	\]
\end{proof}

\begin{rem}\label{rem:symmetry-property}
	We make a distinction between homogeneous and heterogeneous locations.
	
	We mean by \textit{homogeneous} locations that the inventories have
	identical base stock levels $b_{1}=b_{2}=\cdots=b_{J}$ and identical
	arrival rates $\lambda_{1}=\lambda_{2}=\cdots=\lambda_{J}>0$. Service
	rates $\mu_{j}(\cdot)>0$, $j\in\Jset$, obey no such restrictions. 
	
	We mean by \textit{heterogeneous} locations that there may be different
	arrival rates $\lambda_{j}>0$, $j\in\Jset$ (and any service rate
	$\mu_{j}(\cdot)>0$, $j\in\Jset$) and for the base stock levels hold
	$b_{1}\geq b_{2}\geq\cdots\geq b_{J}$.
	
	As a consequence of the preceding \prettyref{prop:BS-STATE-homo-hetero-theta}
	the following \textit{symmetry property} for homogeneous locations
	with base stock levels $b_{1}=b_{2}=\cdots=b_{J}=1$ is valid.
	
	For all permutations $\sigma$ of $\left\{ 1,\ldots,J\right\} $ holds
	\[
	\widetilde{\theta}(\overbrace{k_{1},k_{2},\ldots,k_{J}}^{\substack{\text{inventories}\\
			\text{at locations}
		}
	},\overbrace{k_{J+1}}^{\text{supplier}})=\widetilde{\theta}(\overbrace{k_{\sigma(1)},k_{\sigma(2)},\ldots,k_{\sigma(J)}}^{\substack{\text{inventories}\\
			\text{at locations}
		}
	},\overbrace{k_{\sigma J+1}}^{\text{supplier}}).
	\]
	For $b_{1}=b_{2}=\cdots=b_{J}>1$ the global balance equations \prettyref{eq:BSN-STAR2-theta-equation}
	reveal directly that this symmetry property holds in this case as
	well. 
\end{rem}

\subsubsection{Special case: two locations and $b_{j}>1$, $j\in\Jset$\label{app:BSN-STAR2-iterative-algorithm}\label{app:BSN-STAR2-non-identical}}

In this section, we assume that there are two heterogeneous locations
with base stock levels \textbf{$b_{1}\geq b_{2}$}, where\textbf{
}$b_{1}>1$ and $b_{2}\geq1$ and arrival rates $\lambda_{1},\lambda_{2}>0$.
The state transition diagram for such a system is presented in \prettyref{fig:BSN-STAR2-non-identical-state-transition-diagram}.

\begin{figure}[h]
	\centering{}\includegraphics[width=1\textwidth]{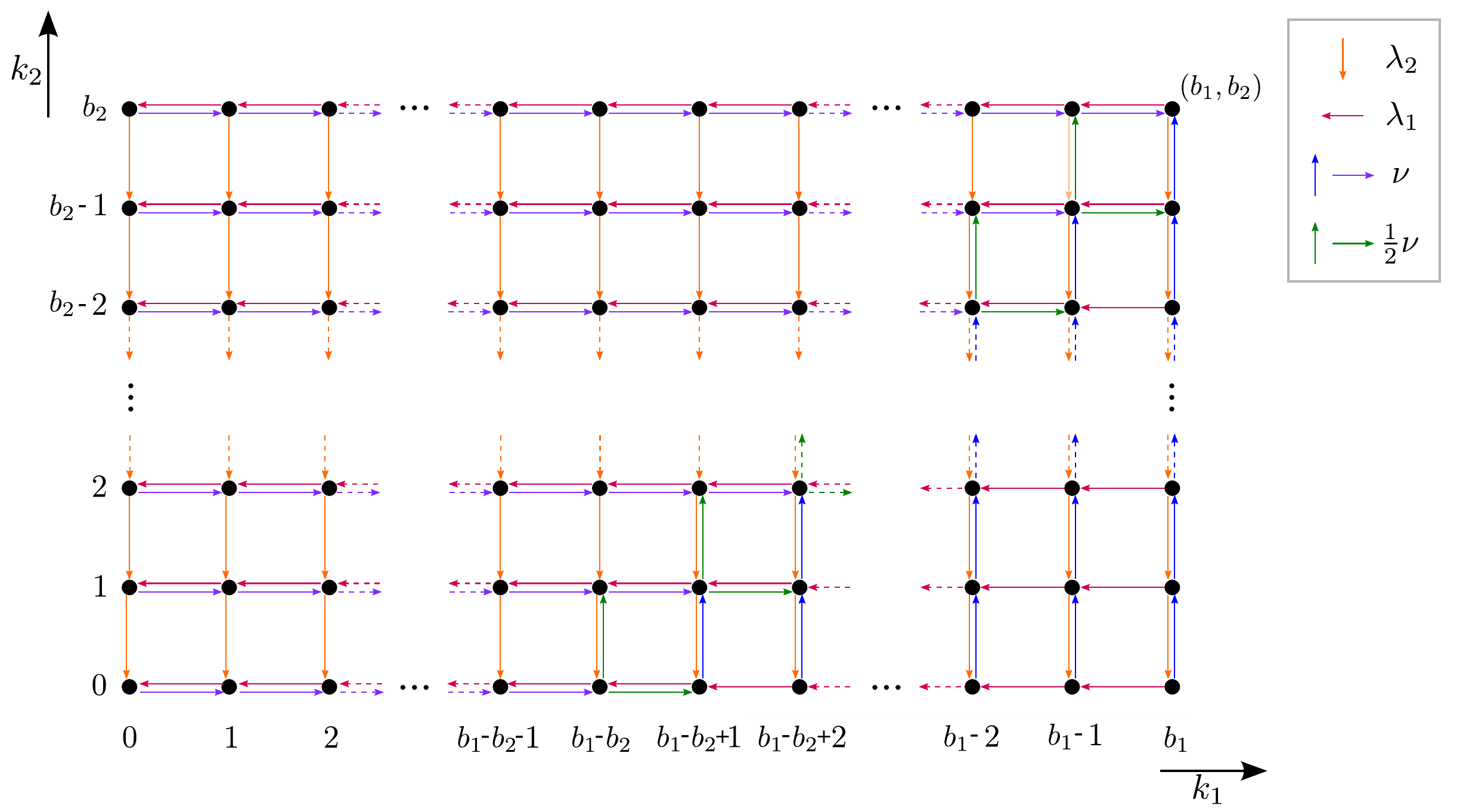}
	\caption{\label{fig:BSN-STAR2-non-identical-state-transition-diagram}
		State transition diagram of a system with two heterogeneous locations}
\end{figure}

To obtain the exact or approximate solutions of queueing models various
analytical, numerical and simulation techniques are available (e.g.~the
power iteration method, generating function approach, product form
solution and recursive solution technique) (cf.~\cite[Section III, pp. 44ff.]{chow;kohler:77}).
The recursive solution technique was first suggested by Herzog, Woo
and Chandy~\cite{herzog;woo;chandy:75}. They demonstrate an efficient
solution for single queueing models with other than exponential arrival
or service time distributions. The recursive technique uses the fact
that the steady state probabilities of the system can sometimes be
expressed in terms of other steady state probabilities. This leads
to a reduction of the number of unknowns in the global balance equations.

Chow and Kohler
present in~\cite{chow;kohler:79} a generalization of the recursive
solution technique. They apply the method to non-homogeneous ($=$
heterogeneous) two-processor systems with special properties in~\cite[Section IV, pp. 358f.]{chow;kohler:79}
and present a sample system using the algorithm in~\cite[Appendix, pp. 360f.]{chow;kohler:79}.
One special property of the load balancing policy that enables to
use their recursive solution technique for the two-processor heterogeneous
systems is that the policy line ($=$ continuous chain of arrival
transitions starting at state $(0,0)$ given that no departure occurs)
partitions the states of the state transition rate diagram into two
regions. Our load balancing policy is slightly different from that
of Chow and Kohler~\cite[Section IV, pp.\ 358f.]{chow;kohler:79}
because of $\frac{1}{2}\nu$ (since if both inventories have the
same difference between the on-hand inventory and the capacity of
the inventory, it enters either with equal probability) as can be
seen in the state transition diagram in \prettyref{fig:BSN-STAR2-non-identical-state-transition-diagram}.
They mentioned that the generalization of recursive solution ``technique
for three or more processors does not appear to be straightforward''~\cite[p. 359]{chow;kohler:79}.

For a system with two heterogeneous locations and \textbf{$b_{1}\geq b_{2}$}
and\textbf{ }$b_{1}>1$, $b_{2}>1$ the recursive method to obtain $\widetilde{\theta}(\kvect)$,
$\kvect\in K$, from the global balance equations $\widetilde{\theta}\cdot\mathbf{Q}_{red}=\mathbf{0}$
(cf. equation \prettyref{eq:BSN-STAR2-theta-equation} on page \pageref{eq:BSN-STAR2-theta-equation}) is described by the following algorithm.
$\kappa$\index{$\kappa$@$\kappa$, variable}
is a variable which represents a temporarily unknown probability and
GBE is used to denote a global balance equation.
We will henceforth use an abbreviated notation because $k_{J+1}=\sum_{j=1}^{J}(b_{j}-k_{j})$
and the base stock levels $b_{j}$, $j\in\Jset$, are fixed parameters:
\[
\widetilde{\theta}\Big(\overbrace{\ \,k_{1},\ \ k_{2}}^{\substack{\text{inventories}\\
		\text{at locations}
	}
}\Big):=\widetilde{\theta}\Big(\overbrace{\ \,k_{1},\ \ k_{2},}^{\substack{\text{inventories}\\
		\text{at locations}
	}
}\overbrace{k_{3}}^{\text{supplier}}\Big)
\]
and hence,
\begin{align*}
p_{i}(\kvect-\evect_{i})  :=p_{i}(\kvect-\evect_{i}+\evect_{J+1}),\qquad
p_{i}(\kvect+\evect_{i})  :=p_{i}(\kvect+\evect_{i}-\evect_{J+1}),\ i\in\Jset.
\end{align*}

A few steps of the  algorithm are visualised in the state transition diagram  in
\prettyref{fig:BS-state-alg-hetero}.
A detailed explanation of the algorithm can be found in \cite[Appendix C.1, pp.~275--311]{otten:17}.

\medskip

\noindent\rule[0.5ex]{1\columnwidth}{1pt}\\
\textbf{ALGORITHM }(\textbf{$b_{1}\geq b_{2}$} with\textbf{ }$b_{1}>1$,
$b_{2}>1$ and\textbf{ $\lambda_{1},\lambda_{2}>0$})
\begin{enumerate}
	\item [$\blacktriangleright$]Set $\widetilde{\theta}(b_{1},0)=1$
	\item [$\blacktriangleright$]For $k_{2}=b_{2},\ldots,1$
	
	\begin{enumerate}
		\item [\textbf{(1)}]if $k_{2}=b_{2}$,
		
		\begin{enumerate}
			\item [\textbf{(a)}]set $\widetilde{\theta}(0,b_{2})=\kappa$
			\item [\textbf{(b)}]for $\ell=0,\ldots,b_{2}-2$\\
			use the GBE of state $(b_{1},{\ell})$ \\
			to find an expression for $\widetilde{\theta}(b_{1},{\ell+1})$
			independent of $\kappa$
			\item [\textbf{(c)}]for $k_{1}=0,\ldots,b_{1}-2$\\
			use the GBE of state $({k_{1}},b_{2})$\\
			to find an expression for $\widetilde{\theta}({k_{1}+1},b_{2})$
			as a function of $\kappa$
			\item [\textbf{(d)}]use the GBE of state $(b_{1},{b_{2}})$\\
			to find an expression for $\widetilde{\theta}(b_{1},{b_{2}})$
			as a function of $\kappa$
			\item [\textbf{(e)}]use the GBE of state $(b_{1}-1,{b_{2}})$\\
			to find an expression for $\widetilde{\theta}(b_{1}-1,{b_{2}}-1)$
			as a function of $\kappa$
			\item [\textbf{(f)}]use the GBE of state $(b_{1},b_{2}-1)$ to solve for $\kappa$
			\item [\textbf{(g)}]substitute the value of $\kappa$ into the equations in
			the above steps \textbf{(1)(a) }\\and\textbf{ (1)(c)}-\textbf{(e)}\\
			$ $
		\end{enumerate}
		
		\item [\textbf{ (2)}]if $b_{2}>k_{2}\geq2$ (i.e.\ $b_{1},b_{2}\geq3$),
		
		\begin{enumerate}
			\item [\textbf{ (a)}]set $\widetilde{\theta}(0,k_{2})=\kappa$
			\item [\textbf{ (b)}]for $k_{1}=0,\ldots,b_{1}-(b_{2}-k_{2})-1$
			
			\begin{enumerate}
				\item [\textbf{(i)}]if $k_{1}<b_{1}-(b_{2}-k_{2})-1$,\\
				use the GBE of state $({k_{1}},k_{2})$ \\
				to find an expression for $\widetilde{\theta}({k_{1}+1},k_{2})$
				as a function of $\kappa$
				\item [\textbf{(ii)}]if $k_{1}=b_{1}-(b_{2}-k_{2})-1$,\\
				use the GBE of state $(k_{1},{k_{2}})$ \\
				to find an expression for $\widetilde{\theta}(k_{1},{k_{2}-1})$
				as a function of $\kappa$
			\end{enumerate}
			\item [\textbf{ (c)}]for $\ell=k_{2},\ldots,1$\\
			use the GBE of state $(b_{1}-(b_{2}-k_{2}),{\ell})$ \\
			to find an expression for $\widetilde{\theta}(b_{1}-(b_{2}-k_{2}),{\ell-1})$
			as a function of $\kappa$
			\item [\textbf{ (d)}]use the GBE of state $(b_{1}-(b_{2}-k_{2}),0)$ to solve
			for $\kappa$
			\item [\textbf{ (e)}]substitute the value of $\kappa$ into the equations in
			the above steps \textbf{(2)(a)}-\textbf{(c)}\\
			$ $
		\end{enumerate}
		\item [\textbf{ (3)}]if $b_{2}>k_{2}=1$
		
		\begin{enumerate}
			\item [\textbf{ (a)}]set $\widetilde{\theta}(0,1)=\kappa$
			\item [\textbf{ (b)}]for $k_{1}=0,\ldots,b_{1}-b_{2}+1$
			
			\begin{enumerate}
				\item [\textbf{ (i)}]if $k_{1}<b_{1}-b_{2}$ (if $b_{1}-b_{2}>0$),\\
				use the GBE of state $({k_{1}},1)$ \\
				to find an expression for $\widetilde{\theta}({k_{1}+1},1)$
				as a function of $\kappa$
				\item [\textbf{ (ii)}]if $k_{1}\in\{b_{1}-b_{2},b_{1}-b_{2}+1\}$,\\
				use the GBE of state $(k_{1},{1})$ \\
				to find an expression for $\widetilde{\theta}(k_{1},{0})$
				as a function of $\kappa$
			\end{enumerate}
			\item [\textbf{ (c)}]for $k_{1}=b_{1}-b_{2},\ldots,1$ (if $b_{1}-b_{2}>0$),\\
			use the GBE of state $({k_{1}},0)$\\
			to find an expression for $\widetilde{\theta}({k_{1}-1},0)$
			\item [\textbf{ (d)}]use the GBE of state $(b_{1}-(b_{2}-1),0)$ to solve for
			$\kappa$
			\item [\textbf{ (e)}]substitute the value of $\kappa$ into the equations in
			the above steps\textbf{ (3)(a)}-\textbf{(c)}
		\end{enumerate}
	\end{enumerate}
	\item [$\blacktriangleright$]Normalise all $\widetilde{\theta}(k_{1},k_{2})$
	by setting 
	\[
	\widetilde{\theta}(k_{1},k_{2})\leftarrow\frac{\widetilde{\theta}(k_{1},k_{2})}{\sum_{k_{1}=0}^{b_{1}}\sum_{k_{2}=0}^{b_{2}}\widetilde{\theta}(k_{1},k_{2})}
	\]
\end{enumerate}
\rule[0.5ex]{1\columnwidth}{1pt}

\begin{figure}[H]
	\centering{}\subfigure[First loop with $k_2=b_2$]{\resizebox*{12cm}{!}{
			\includegraphics[width=0.6\textwidth]{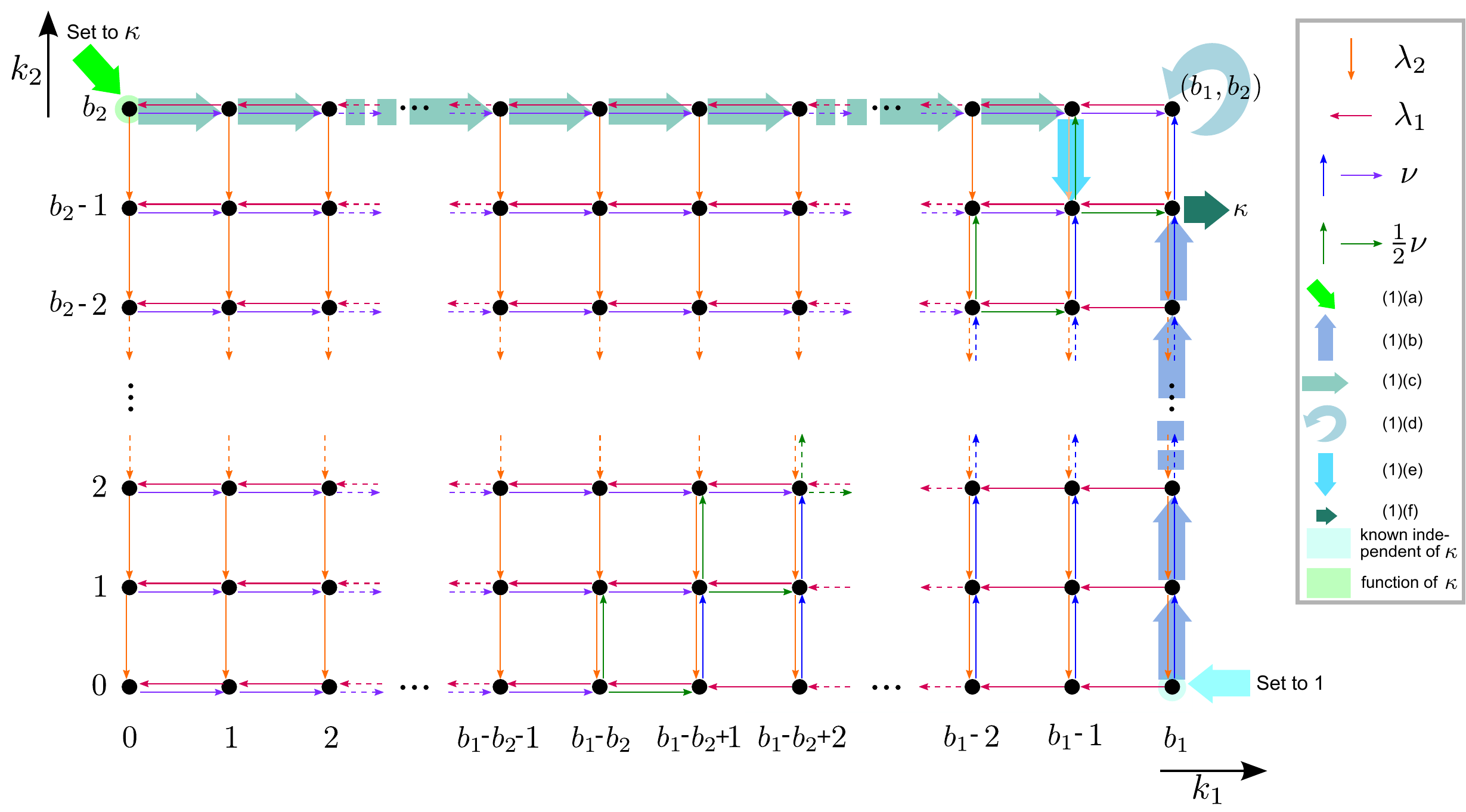}
		}\hspace{6pt} }\\
	\subfigure[Second loop with $k_2=b_2-1$]{\resizebox*{12cm}{!}{ \includegraphics[width=0.6\textwidth]{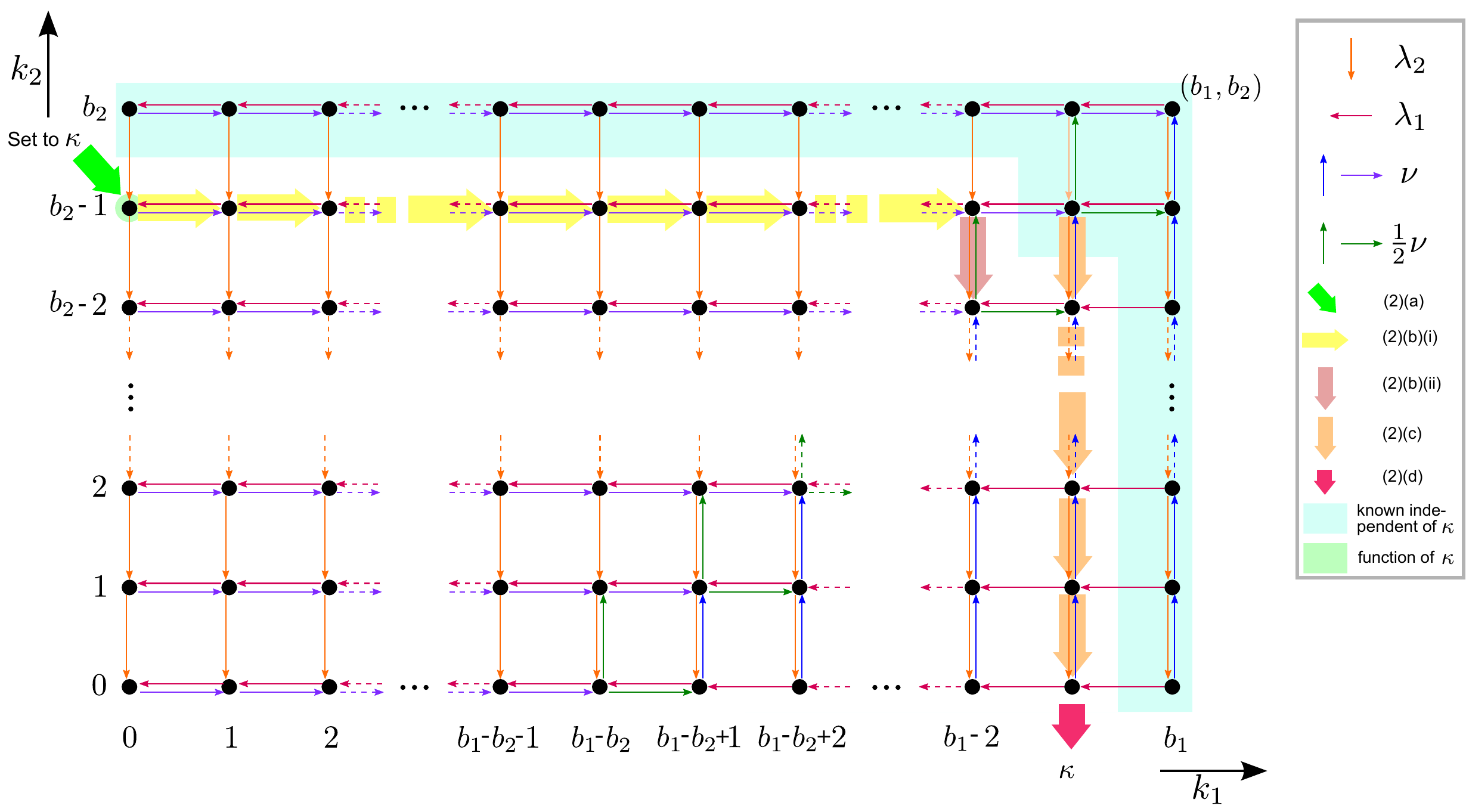}
		}\hspace{6pt} }\\
	\subfigure[Last loop with $k_2=1$]{\resizebox*{12cm}{!}{ \includegraphics[width=0.6\textwidth]{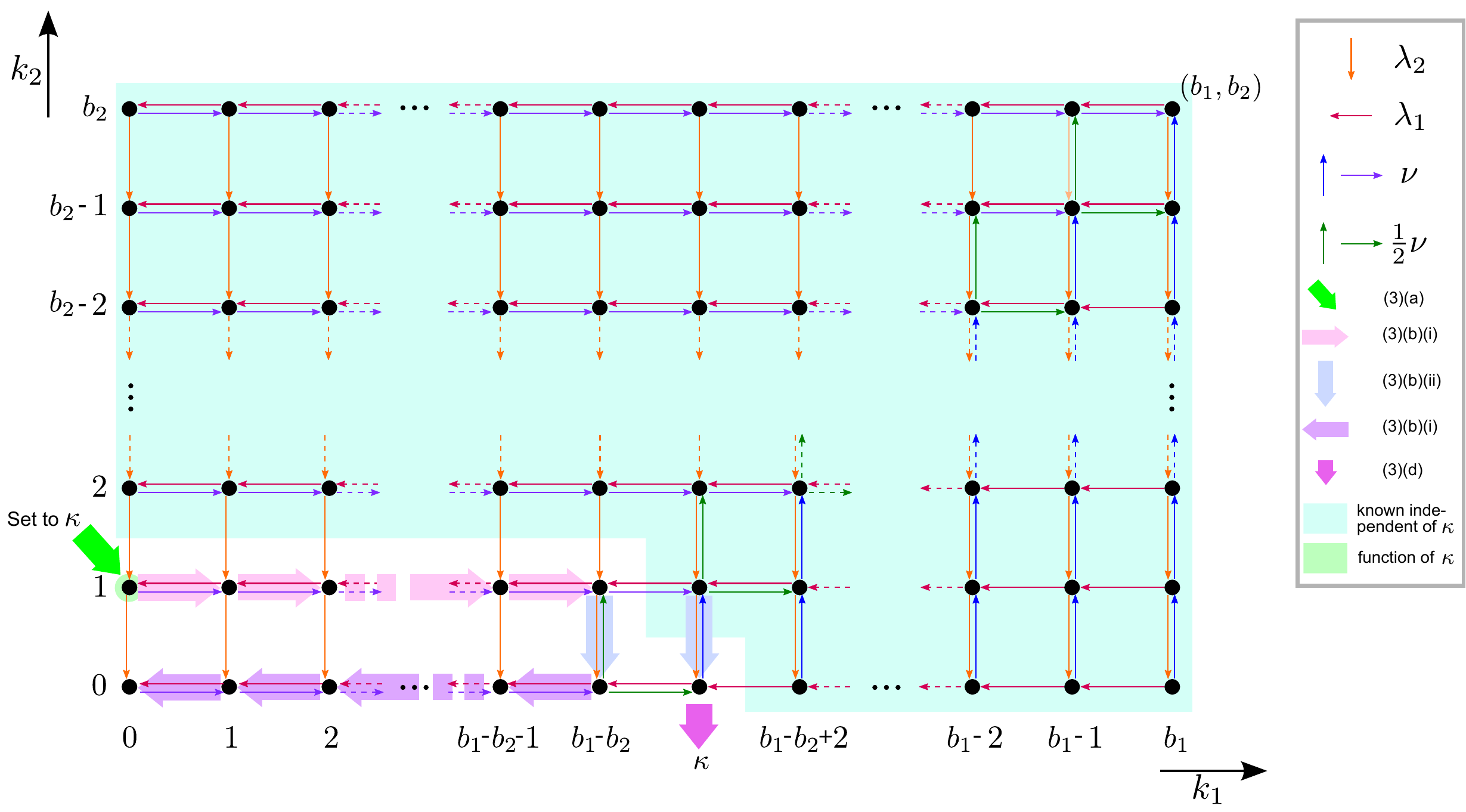}
		}\hspace{6pt} } \caption{\label{fig:BS-state-alg-hetero}Visualisation of a few steps of the
		algorithm}
\end{figure}

\newpage
\noindent\textbf{Special case: Two homogeneous locations\label{app:BSN-STAR2-identical}}\\
Our algorithm for two locations can also be applied to obtain $\widetilde{\theta}$
for a system with two homogeneous locations, i.e.~where the inventories
have identical base stock levels $b_{1}=b_{2}>1$ and identical arrival
rates $\lambda_{1}=\lambda_{2}>0$ (and any service rates $\mu_{1},\mu_{2}>0$).
The state transition diagram for such a system is presented in \prettyref{fig:BSN-STAR2-identical-state-transition-diagram-1}. 
However, the algorithm can be simplified for the case of two homogeneous
locations, since in the state transition diagram of a system with
two homogeneous locations is symmetric about the diagonal elements
$(k_{1},k_{2})$. Hence, the state transition diagram can be folded
to obtain a triangle as shown in \prettyref{fig:BSN-STAR2-identical-state-transition-diagram-1}.
Consequently, it holds $\widetilde{\theta}(k_{1},k_{2})=\widetilde{\theta}(k_{2},k_{1})$.

\begin{figure}[H]
	\centering{}\includegraphics[width=1\textwidth]{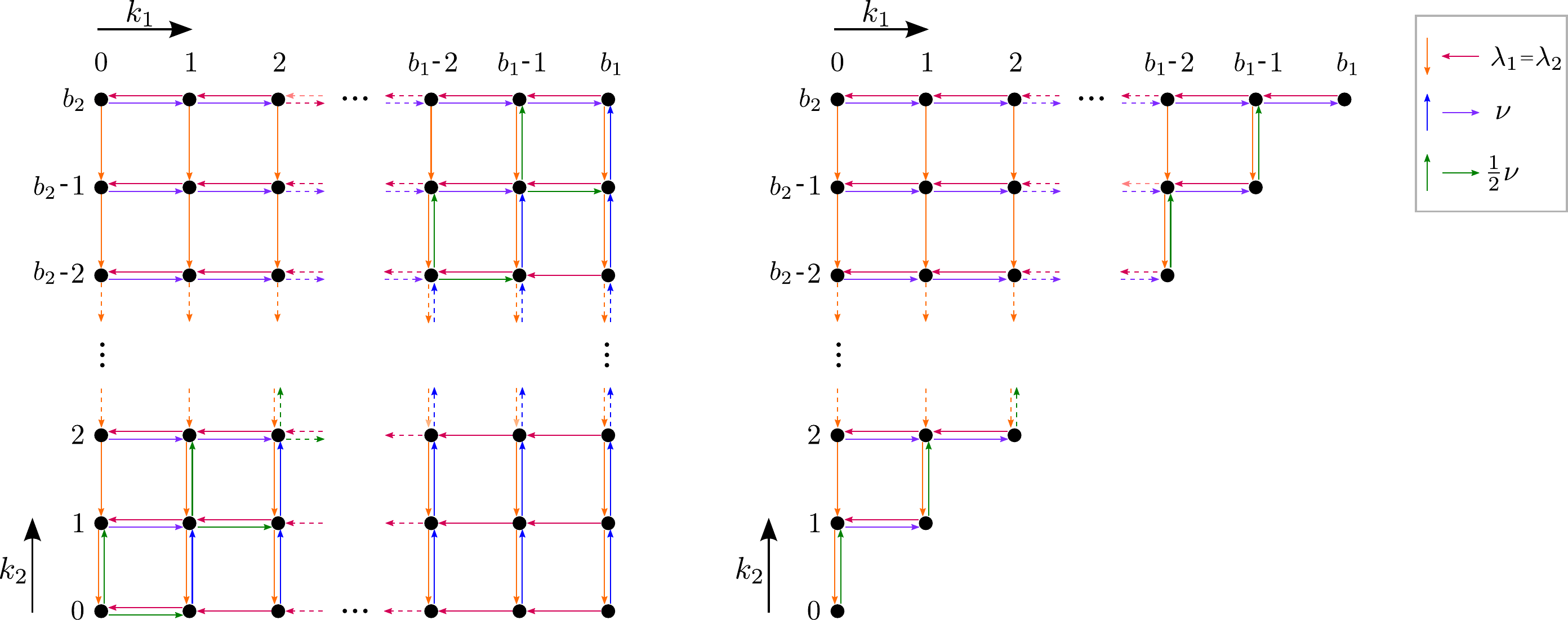}\caption{\label{fig:BSN-STAR2-identical-state-transition-diagram-1}State transition
		diagram (left) and folded state transistion diagram (right) of a system with two homogeneous locations}
\end{figure}

Chow and Kohler analyse in \cite{chow;kohler:77} the performance
of homogeneous two-processor distributed computer systems under several
dynamic load balancing policies. Their analy-\linebreak{}
sis is based on the recursive solution technique and they illustrated
the algorithm for a sample system~\cite[Appendix, pp. 51f.]{chow;kohler:77}.
Their strategy ``join the shorter queue without channel transfer''
in Chow and Kohler's Model B~\cite[pp.\ 42f.]{chow;kohler:77} is
equivalent to our strict load balancing policy. 
Additionally, they allow a channel transfer in Model C~\cite[pp. 42f.]{chow;kohler:77}
and show that under heavy load this strategy can improve the performance
(turnaround time) of the two-processor system (cf.~\cite[pp. 47f.]{chow;kohler:77}).

We can also extend our model with two locations by a channel transfer
and obtain a stationary distribution of product form. Then, the inventories
at the locations are connected through a transfer channel. The transfer
time of the channel is exponentially distributed with rate $\beta>0$.
If the difference between the number of items in location $i$ and
the number of items at location $j$ is equal to or greater than two
($=$ disbalance condition), then the transfer channel initiates
a transfer from items from location $i$ to location $j$. There can
be only one transfer at a time and the transfer of an item is discontinued
if the disbalance condition changes before the transfer is completed.
The state transition diagram can still be folded into a triangle,
so that the symmetry property holds. Second, because the channel transfer
leads to a further summand on the left side of the GBE (flow into
the state). The state transition diagram as well as the folded state
transition diagram for such a system with two locations are presented
in \prettyref{fig:BSN-STAR2-identical-state-transition-diagram-2}.

\begin{figure}[H]
	\centering{}\includegraphics[width=1\textwidth]{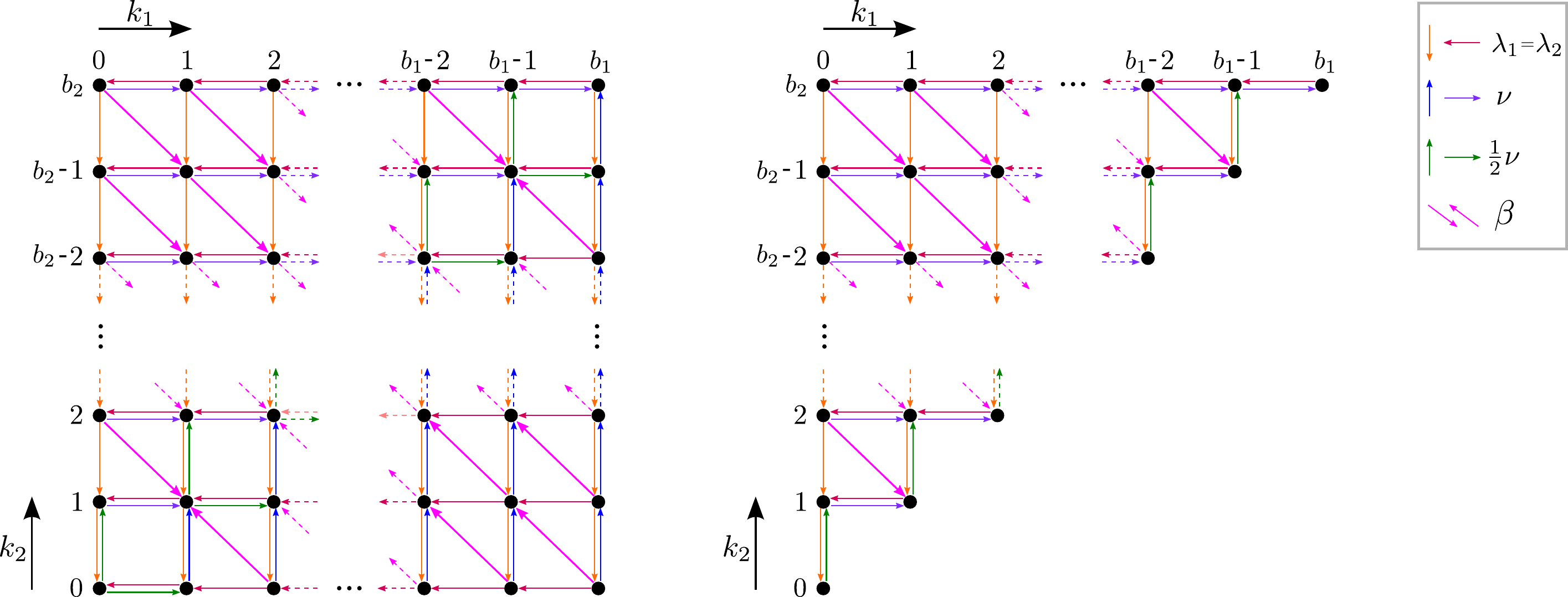}\caption{\label{fig:BSN-STAR2-identical-state-transition-diagram-2}State transition
		diagram (left) and folded state transition diagram (right) of a system with two homogeneous locations and transfer channel}
\end{figure}

\section{Structural properties of the stationary inventory-replenishment subsystem \label{sec:strucural properties}}

In this section, we assume that the queueing-inventory process $Z$
is ergodic. We make again a distinction between homogeneous and heterogeneous
locations.
In this section, we will use an abbreviated notation because $k_{J+1}=\sum_{j=1}^{J}(b_{j}-k_{j})$
and the base stock levels $b_{j}$, $j\in\Jset$, are fixed parameters:
\[
\theta(\overbrace{k_{1},k_{2},\ldots,k_{J}}^{\substack{\text{inventories}\\
		\text{at locations}
	}
}):=\theta(\overbrace{k_{1},k_{2},\ldots,k_{J}}^{\substack{\text{inventories}\\
		\text{at locations}
	}
},\overbrace{k_{J+1}}^{\text{supplier}}).
\]
$ $

\paragraph{Homogeneous locations\label{par:BSN-STAR2-homo-structure}}

Let $(Y_{1},Y_{2},\ldots Y_{J},W_{J+1})$ be a random variable which
is distributed according to the marginal steady state probability
for the inventory-replenishment subsystem.\footnote{It should be noted that $\theta(k_{1},k_{2},\ldots,k_{J},k_{J+1})=P(Y_{1}=k_{1},Y_{2}=k_{2},\ldots,Y_{J}=k_{J},W_{J+1}=k_{J+1})=P(Y_{1}=k_{1},Y_{2}=k_{2},\ldots,Y_{J}=k_{J})$
	because the base stock levels $b_{j}$, $j\in\Jset$, are fixed parameters
	and $k_{J+1}=\sum_{j=1}^{J}(b_{j}-k_{j})$.}
\begin{prop}
	For the inventory process holds
	\[
	P\left(Y_{1}=\ell\right)=P\left(Y_{j}=\ell\right),\quad j\in\Jset\setminus\left\{ 1\right\} ,\ \ell\in\{0,\ldots,b_{1}\},
	\]
	\begin{align}
	& \phantomeq P\left(Y_{1}=\ell\right)\cdot\lambda_{1}\label{eq:homo-inventory-process}\\
	& =P\left(Y_{j}=\ell-1\ \text{for }j\in\Jset\right)\cdot\frac{\nu}{J}\nonumber \\
	& \phantomeq+\sum_{i=1}^{J-1}P\left(Y_{j}=\ell-1\ \text{for }j=1,\ldots,i\ \text{and }\ell-1<Y_{k}\leq b_{k}\ \text{for }k=i+1,\ldots,J\right)\nonumber \\
	& \phantomeq\hphantom{+\sum_{i=1}^{J-1}}\cdot\binom{J-1}{i-1}\cdot\frac{\nu}{i},\qquad\qquad\qquad\ell\in\{1,\ldots,b_{1}\}.\nonumber 
	\end{align}
\end{prop}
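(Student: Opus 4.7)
I would split the proposition into its two assertions: the marginal symmetry $P(Y_1=\ell)=P(Y_j=\ell)$ follows immediately from \prettyref{rem:symmetry-property}, while the balance identity \prettyref{eq:homo-inventory-process} I would establish by a cut-balance (level-crossing) argument applied to the ergodic reduced Markov chain generated by $\mathbf{Q}_{red}$, cut along $\{k_1\leq\ell-1\}$ versus $\{k_1\geq\ell\}$, combined with the symmetry from the first part.

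\textbf{Symmetry.} In the homogeneous case all $b_j$ and $\lambda_j$ coincide, so \prettyref{rem:symmetry-property} gives $\widetilde{\theta}(\kvect)=\widetilde{\theta}(\sigma\kvect)$ for every permutation $\sigma$ of $\Jset$; after normalisation the same invariance is inherited by $\theta$. Summing $\theta$ over all $\kvect$ with $k_j=\ell$ therefore yields a one-dimensional marginal that does not depend on $j$, giving the first assertion.

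\textbf{Level-crossing.} Since $Z$ is ergodic, the reduced chain with stationary distribution $\theta$ satisfies cut balance across the above cut. From $\mathbf{Q}_{red}$ the only down-crossings (transitions $k_1=\ell\to k_1=\ell-1$) are demand events at location $1$ at rate $\lambda_1$ (the indicator $1_{\{k_1>0\}}$ is automatic because $\ell\geq 1$), so the down-cross flow equals $\lambda_1\sum_{\kvect:\,k_1=\ell}\theta(\kvect)=\lambda_1 P(Y_1=\ell)$, i.e.\ the LHS of \prettyref{eq:homo-inventory-process}. The matching up-crossings are supplier-to-location-$1$ deliveries from states with $k_1=\ell-1$, each contributing $\nu\,p_1(\kvect)$. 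With equal base stocks $p_1(\kvect)>0$ precisely when $k_1=\min_{j\in\Jset} k_j$, and in that case $p_1(\kvect)=1/|T(\kvect)|$ for the tied set $T(\kvect):=\{j\in\Jset:k_j=k_1\}\ni 1$. I would then stratify by $i:=|T(\kvect)|\in\{1,\dots,J\}$: the $i=J$ all-tied stratum contributes $(\nu/J)\,P(Y_j=\ell-1\ \forall j)$, and for each $1\leq i\leq J-1$ the $\binom{J-1}{i-1}$ size-$i$ subsets of $\Jset$ containing $1$ contribute, by the permutation symmetry of $\theta$ from the previous step, identical probabilities---namely the canonical event with $T=\{1,\dots,i\}$---each scaled by $\nu/i$. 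Summing the two cases reproduces exactly the RHS of \prettyref{eq:homo-inventory-process}.

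\textbf{Main obstacle.} The only delicate piece is the combinatorial bookkeeping in the level-crossing step: one must convert the ``argmax over deficits'' criterion defining $p_1(\kvect)$ into an ``argmin over inventories'' (valid because of homogeneity), stratify by the tie cardinality $i$, and invoke the permutation symmetry of $\theta$ to collapse each stratum to the single asymmetric representative event written in the proposition.
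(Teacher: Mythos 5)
Your proposal is correct and follows essentially the same route as the paper: the marginal identity via the permutation symmetry of $\theta$ for homogeneous locations, and the balance identity via the cut-criterion across $\{k_1\leq\ell-1\}$ versus $\{k_1\geq\ell\}$, with the up-crossing flow stratified by the tie cardinality $i$ and collapsed to the canonical representative $T=\{1,\dots,i\}$ using symmetry to produce the factor $\binom{J-1}{i-1}\cdot\frac{\nu}{i}$. The only cosmetic difference is that you phrase the routing condition as an argmin over inventories rather than an argmax over deficits, which is equivalent here.
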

\begin{proof}
	It holds
	\[
	P\left(Y_{1}=\ell\right)=P\left(Y_{j}=\ell\right),\quad j\in\Jset\setminus\left\{ 1\right\} ,\ \ell\in\{0,\ldots,b_{1}\}
	\]
	because of the symmetry property for homogeneous locations (see~\prettyref{rem:symmetry-property})
	and because
	\[
	P\left(Y_{1}=\ell\right)=\sum_{k_{2}=0}^{b_{2}}\cdots\sum_{k_{J}=0}^{b_{J}}\theta(\ell,k_{2},\ldots,k_{J}).
	\]
	
	The equation \prettyref{eq:homo-inventory-process} can be proven
	by the cut-criterion for positive recurrent processes (see \cite[Lemma
	1.4, p. 8]{kelly:79}).
	
	For $\ell\in\left\{ 1,\ldots,b_{1}\right\} $, it can be proven by
	a cut, which divides $E$ into complementary sets according to the
	size of the inventory at location $1$ that is less than or equal
	to $\ell-1$ or greater than $\ell-1$, i.e.\ into the sets
	\[
	\Big\{(k_{1},k_{2},\ldots,k_{J}):k_{1}\in\{0,1,\ldots,\ell-1\},\:k_{j}\in\{0,\ldots,b_{j}\},\ j\in\left\{ 2,\ldots,J\right\} \Big\},\hphantom{\quad\ell_{1}\in\left\{ 1,\ldots,b_{1}\right\} .}
	\]
	\[
	\left\{ (\widetilde{k}_{1},\widetilde{k}_{2},\ldots,\widetilde{k}_{J}):\widetilde{k}_{1}\in\{\ell,\ldots,b_{1}\},\:\widetilde{k}_{j}\in\{0,\ldots,b_{j}\},\ j\in\left\{ 2,\ldots,J\right\} \right\} ,\ \ell\in\left\{ 1,\ldots,b_{1}\right\} .
	\]
	Then, it follows for $\ell\in\left\{ 1,\ldots,b_{1}\right\} $
	\begin{align}
	& \phantomeq\sum_{k_{1}=0}^{\ell-1}\sum_{k_{2}=0}^{b_{2}}\cdots\sum_{k_{J}=0}^{b_{J}}\ \sum_{\widetilde{k}_{1}=\ell}^{b_{1}}\sum_{\widetilde{k}_{2}=0}^{b_{2}}\cdots\sum_{\widetilde{k}_{J}=0}^{b_{J}}\theta(k_{1},k_{2},\ldots,k_{J})\nonumber \\
	& \hphantom{\phantomeq\sum_{k_{1}=0}^{\ell-1}\sum_{k_{2}=0}^{b_{2}}\cdots\sum_{k_{J}=0}^{b_{J}}\ \sum_{\widetilde{k}_{1}=\ell}^{b_{1}}\sum_{\widetilde{k}_{2}=0}^{b_{2}}\cdots\sum_{\widetilde{k}_{J}=0}^{b_{J}}}\cdot q_{red}\left((k_{1},k_{2},\ldots,k_{J});(\widetilde{k}_{1},\widetilde{k}_{2},\ldots,\widetilde{k}_{J})\right)\nonumber \\
	& =\sum_{\widetilde{k}_{1}=\ell}^{b_{1}}\sum_{\widetilde{k}_{2}=0}^{b_{2}}\cdots\sum_{\widetilde{k}_{J}=0}^{b_{J}}\ \sum_{k_{1}=0}^{\ell-1}\sum_{k_{2}=0}^{b_{2}}\cdots\sum_{k_{J}=0}^{b_{J}}\theta(\widetilde{k}_{1},\widetilde{k}_{2},\ldots,\widetilde{k}_{J})\nonumber \\
	& \hphantom{\phantomeq\sum_{k_{1}=0}^{\ell-1}\sum_{k_{2}=0}^{b_{2}}\cdots\sum_{k_{J}=0}^{b_{J}}\ \sum_{\widetilde{k}_{1}=\ell}^{b_{1}}\sum_{\widetilde{k}_{2}=0}^{b_{2}}\cdots\sum_{\widetilde{k}_{J}=0}^{b_{J}}}\cdot q_{red}\left((\widetilde{k}_{1},\widetilde{k}_{2},\ldots,\widetilde{k}_{J});(k_{1},k_{2},\ldots,k_{J})\right)\nonumber \\
	\nonumber \\
	\Leftrightarrow & \phantomeq\theta(\ell-1,\ell-1,\ldots,\ell-1)\cdot\frac{\nu}{J}\label{eq:load-formel-1}\\
	& \phantomeq+\sum_{i=1}^{J-1}\sum_{k_{i+1}=\ell}^{b_{i+1}}\cdots\sum_{k_{J}=\ell}^{b_{J}}\theta(\ell-1,\ell-1,\ldots,\ell-1,k_{i+1},\ldots,k_{J})\cdot\binom{J-1}{i-1}\cdot\frac{1}{i}\cdot\nu\label{eq:load-formel-2}\\
	& =\sum_{\widetilde{k}_{2}=0}^{b_{2}}\cdots\sum_{\widetilde{k}_{J}=0}^{b_{J}}\theta(\ell,\widetilde{k}_{2},\ldots,\widetilde{k}_{J})\cdot\lambda_{1}.\nonumber 
	\end{align}
	The only possible transitions from the set, where the size of the
	inventory at location $1$ is less than or equal to $\ell-1$, to
	the set, where the inventory at location $1$ is greater than $\ell-1$,
	are transitions according to a replenishment. In particular, transitions
	from\\
	$
	\Big\{(\ell-1,k_{2},\ldots,k_{J}):k_{j}\in\{\ell-1,\ldots,b_{j}\},\ j\in\left\{ 2,\ldots,J\right\} \Big\},\hphantom{\quad\ell_{1}\in\left\{ 1,\ldots,b_{1}\right\} .}
	$
	\\
	to
	$
	\left\{ (\ell,\widetilde{k}_{2},\ldots,\widetilde{k}_{J}):\widetilde{k}_{j}\in\{\ell-1,\ldots,b_{j}\},\ j\in\left\{ 2,\ldots,J\right\} \right\} ,\ \ell\in\left\{ 1,\ldots,b_{1}\right\} .
	$
	
	A replenishment at location $1$ is only possible if $\{1\}\subseteq\underset{j\in\Jset}{\arg\max}(b_{j}-k_{j})$.
	This means that there is no other location with higher difference
	between the on-hand inventory and the capacity of the inventory ($=$
	base stock level). Consequently, all possible states where the other
	locations have $\ell-1$ items or more items in the inventory have
	to be considered.\\
	In \prettyref{eq:load-formel-1} all locations have exactly $\ell-1$
	items in the inventory and in \prettyref{eq:load-formel-2} $i$ states
	how many locations have exactly $\ell-1$ items in the inventory.
	This results in the factor $\frac{1}{i}$, which is the probability
	that the finished item is sent to location $i$. The symmetry property
	leads to the factor $\binom{J-1}{i-1}$.
	Hence, we have shown for $\ell\in\left\{ 1,\ldots,b_{1}\right\} $
	\begin{align*}
	& \phantomeq P\left(Y_{1}=\ell\right)\cdot\lambda_{1}\\
	& =P\left(Y_{j}=\ell-1\ \text{for }j\in\Jset\right)\cdot\frac{\nu}{J}\\
	& \phantomeq+\sum_{i=1}^{J-1}P\left(Y_{j}=\ell-1\ \text{for }j=1,\ldots,i\ \text{and }\ell-1<Y_{k}\leq b_{k}\ \text{for }k=i+1,\ldots,J\right)\\
	& \phantomeq\hphantom{+\sum_{i=1}^{J-1}}\cdot\binom{J-1}{i-1}\cdot\frac{\nu}{i}.
	\end{align*}
\end{proof}
\paragraph{Heterogeneous locations\label{par:BSN-STAR2-homo-structure-1}}
Let $(Y_{1},Y_{2},W_{3})$ be a random variable which is distributed
according to the marginal steady state probability for the inventory-replenishment
subsystem.\footnote{It should be noted that $\theta(k_{1},k_{2},k_{3})=P(Y_{1}=k_{1},Y_{2}=k_{2},W_{3}=k_{3})=P(Y_{1}=k_{1},Y_{2}=k_{2})$,
	because the base stock levels $b_{1}$ and $b_{2}$ are fixed parameters
	and $k_{3}=(b_{1}+b_{2})-(k_{1}+k_{2})$.}
\begin{rem}
	From equation \prettyref{eq:BS-state-homo-Yl1} in the following proposition
	follows
	\[
	P(Y_{1}=\ell_{1})=P(Y_{1}=0)\cdot\left(\frac{\nu}{\lambda_{1}}\right)^{\ell_{1}},\qquad\ell_{1}\in\left\{ 1,\ldots,b_{1}-b_{2}\right\} .
	\]
\end{rem}

\begin{prop}
	For the inventory process holds\\
	for $\ell_{1}=1,\ldots,b_{1}-b_{2}$
	\begin{equation}
	P(Y_{1}=\ell_{1})\cdot\lambda_{1}=P(Y_{1}=\ell_{1}-1)\cdot\nu,\qquad\qquad\qquad\qquad\qquad\ \label{eq:BS-state-homo-Yl1}
	\end{equation}
	for $\ell_{1}=b_{1}-b_{2}+1,\ldots,b_{1}-1$ 
	\begin{align}
	P(Y_{1}=\ell_{1})\cdot\lambda_{1} & =P(Y_{1}=\ell_{1}-1,Y_{2}=\ell_{1}-1-b_{1}+b_{2})\cdot\frac{1}{2}\nu\nonumber \\
	& \phantomeq+P(Y_{1}=\ell_{1}-1,Y_{2}>\ell_{1}-1-b_{1}+b_{2})\cdot\nu,\label{eq:BS-state-homo-Yl2}
	\end{align}
	for $\ell_{1}=b_{1}$
	\begin{align}
	P(Y_{1}=b_{1})\cdot\lambda_{1} & =P(Y_{1}=b_{1}-1,Y_{2}=b_{2}-1)\cdot\frac{1}{2}\nu\qquad\qquad\nonumber \\
	& \phantomeq+P(Y_{1}=b_{1}-1,Y_{2}=b_{2})\cdot\nu,\label{eq:BS-state-homo-Yl3}
	\end{align}
	\textup{for $\ell_{2}=1,\ldots,b_{2}$ 
		\begin{align}
		P(Y_{2}=\ell_{2})\cdot\lambda_{2} & =P(Y_{1}=b_{1}-b_{2}+\ell_{2}-1,Y_{2}=\ell_{2}-1)\cdot\frac{1}{2}\nu\nonumber \\
		& \phantomeq+P(Y_{1}>b_{1}-b_{2}+\ell_{2}-1,Y_{2}=\ell_{2}-1)\cdot\nu.\label{eq:BS-state-homo-Yl4}
		\end{align}
	}
\end{prop}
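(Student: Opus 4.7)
The plan is to apply the cut-criterion for positive recurrent processes (Kelly~\cite[Lemma~1.4]{kelly:79}) to the reduced Markov chain on $K$ generated by $\mathbf{Q}_{red}$, exactly as in the preceding homogeneous proposition but with cuts tailored to the heterogeneous two-location geometry. The only transitions of the reduced chain are a demand at location~$i$ decreasing $k_i$ by one at rate $\lambda_i \cdot 1_{\{k_i>0\}}$, and a replenishment at location~$i$ increasing $k_i$ by one at rate $\nu \cdot p_i(\kvect)\cdot 1_{\{k_i<b_i\}}$, so across a cut based solely on the value of a single coordinate only the transitions at that coordinate can contribute.

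For \prettyref{eq:BS-state-homo-Yl1}, \prettyref{eq:BS-state-homo-Yl2}, and \prettyref{eq:BS-state-homo-Yl3} I would use the cut $\{k_1 \leq \ell_1 - 1\}$ versus $\{k_1 \geq \ell_1\}$. The downwards flow is carried solely by demand at location~$1$ when $k_1 = \ell_1$, which sums to $P(Y_1 = \ell_1)\lambda_1$. The upwards flow is carried solely by replenishment into location~$1$ when $k_1 = \ell_1 - 1$, which equals $\sum_{k_2} \theta(\ell_1 - 1, k_2)\,\nu\,p_1(\ell_1 - 1, k_2)$. It then remains to evaluate $p_1(\ell_1 - 1, k_2)$ by comparing $b_1 - (\ell_1 - 1)$ with $b_2 - k_2$. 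For $\ell_1 \leq b_1 - b_2$ the quantity $b_1 - \ell_1 + 1$ strictly exceeds $b_2 - k_2$ for every admissible $k_2$, so $p_1 \equiv 1$ and \prettyref{eq:BS-state-homo-Yl1} drops out. For $b_1 - b_2 + 1 \leq \ell_1 \leq b_1 - 1$ the equality $b_2 - k_2 = b_1 - \ell_1 + 1$ occurs precisely at $k_2 = \ell_1 - 1 - b_1 + b_2$, contributing the $\tfrac12\nu$ term, while $k_2 > \ell_1 - 1 - b_1 + b_2$ makes location~$1$ the unique maximiser with $p_1 = 1$, contributing the full $\nu$ term; this gives \prettyref{eq:BS-state-homo-Yl2}. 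The boundary case $\ell_1 = b_1$ has the tie at $k_2 = b_2 - 1$ and $p_1 = 1$ at $k_2 = b_2$, which is \prettyref{eq:BS-state-homo-Yl3}.

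For \prettyref{eq:BS-state-homo-Yl4} I would switch to the cut $\{k_2 \leq \ell_2 - 1\}$ versus $\{k_2 \geq \ell_2\}$. By exactly the same logic, the downwards flow is $P(Y_2 = \ell_2)\lambda_2$ and the upwards flow is $\sum_{k_1} \theta(k_1, \ell_2 - 1)\,\nu\,p_2(k_1, \ell_2 - 1)$. Comparing $b_1 - k_1$ with $b_2 - \ell_2 + 1$: equality at $k_1 = b_1 - b_2 + \ell_2 - 1$ gives the $\tfrac12\nu$ term, and $k_1 > b_1 - b_2 + \ell_2 - 1$ makes location~$2$ the unique maximiser and gives the full $\nu$ term, while smaller values of $k_1$ contribute nothing. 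Summing produces exactly \prettyref{eq:BS-state-homo-Yl4}.

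The main obstacle I foresee is routine but tedious bookkeeping: for each range of $\ell_1$ or $\ell_2$ I have to identify correctly which value of the other coordinate produces a tie in the load-balancing comparison and which values give strict inequality, and I have to verify that these values stay within $[0, b_2]$ or $[0, b_1]$ (for example the edge case $\ell_1 = b_1 - b_2 + 1$, where the tie sits at $k_2 = 0$, and $\ell_2 = b_2$, where the strict-inequality set on $k_1$ collapses to $\{b_1\}$). Once these case distinctions are organised, each identity follows directly from the cut balance.
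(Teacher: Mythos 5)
Your proposal is correct and follows essentially the same route as the paper: the paper also invokes the cut-criterion of Kelly (Lemma 1.4) with exactly the cuts $\{k_1\le\ell_1-1\}$ versus $\{k_1\ge\ell_1\}$ for \prettyref{eq:BS-state-homo-Yl1}--\prettyref{eq:BS-state-homo-Yl3} and $\{k_2\le\ell_2-1\}$ versus $\{k_2\ge\ell_2\}$ for \prettyref{eq:BS-state-homo-Yl4}, and resolves the routing probability $p_i$ by the same tie/strict-inequality case distinction you describe. Your identification of the tie locations ($k_2=\ell_1-1-b_1+b_2$, respectively $k_1=b_1-b_2+\ell_2-1$) matches the paper's computation.
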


\begin{proof}
	The equations can be proven by the cut-criterion for positive recurrent
	processes (see \cite[Lemma	1.4, p. 8]{kelly:79}). 
	
	For $\ell_{1}\in\left\{ 1,\ldots,b_{1}-b_{2}\right\} $, equation
	\prettyref{eq:BS-state-homo-Yl1} can be proven by a cut, which divides
	$E$ into complementary sets according to the size of the inventory
	at location $1$ that is less than or equal to $\ell_{1}-1$ or greater
	than $\ell_{1}-1$, i.e.\ into the sets
	\[
	\Big\{(k_{1},k_{2}):k_{1}\in\{0,1,\ldots,\ell_{1}-1\},\:k_{2}\in\{0,\ldots,b_{2}\}\Big\},\hphantom{\quad\ell_{1}\in\left\{ 1,\ldots,b_{1}-b_{2}\right\} .}
	\]
	\[
	\left\{ (\widetilde{k}_{1},\widetilde{k}_{2}):\widetilde{k}_{1}\in\{\ell_{1},\ldots,b_{1}\},\:\widetilde{k}_{2}\in\{0,\ldots,b_{2}\}\right\} ,\quad\ell_{1}\in\left\{ 1,\ldots,b_{1}-b_{2}\right\} .
	\]
	Then, it follows for $\ell_{1}\in\left\{ 1,\ldots,b_{1}-b_{2}\right\} $
	\begin{align*}
	& \phantomeq\sum_{k_{1}=0}^{\ell_{1}-1}\sum_{k_{2}=0}^{b_{2}}\ \sum_{\widetilde{k}_{1}=\ell_{1}}^{b_{1}}\sum_{\widetilde{k}_{2}=0}^{b_{2}}\theta(k_{1},k_{2})\cdot q_{red}\left((k_{1},k_{2});(\widetilde{k}_{1},\widetilde{k}_{2})\right)\\
	& =\sum_{\widetilde{k}_{1}=\ell_{1}}^{b_{1}}\sum_{\widetilde{k}_{2}=0}^{b_{2}}\ \sum_{k_{1}=0}^{\ell_{1}-1}\sum_{k_{2}=0}^{b_{2}}\theta(\widetilde{k}_{1},\widetilde{k}_{2})\cdot q_{red}\left((\widetilde{k}_{1},\widetilde{k}_{2});(k_{1},k_{2})\right)\\
	\Leftrightarrow\quad & \phantomeq\underbrace{\sum_{k_{2}=0}^{b_{2}}\theta(\ell_{1}-1,k_{2})}_{=P(Y_{1}=\ell_{1}-1)}\cdot\nu=\underbrace{\sum_{\widetilde{k}_{2}=0}^{b_{2}}\theta(\ell_{1},\widetilde{k}_{2})}_{=P(Y_{1}=\ell_{1})}\cdot\lambda_{1}.
	\end{align*}
	Hence, we have shown for $\ell_{1}\in\left\{ 1,\ldots,b_{1}-b_{2}\right\} $
	\begin{align*}
	P(Y_{1} & =\ell_{1}-1)\cdot\nu=P(Y_{1}=\ell_{1})\cdot\lambda_{1}.
	\end{align*}
	For $\ell_{1}\in\left\{ b_{1}-b_{2}+1,\ldots,b_{1}-1\right\} $, equation
	\prettyref{eq:BS-state-homo-Yl2} can be proven by a cut, which divides
	$E$ into complementary sets according to the size of the inventory
	at location $1$ that is less than or equal to $\ell_{1}-1$ or greater
	than $\ell_{1}-1$, i.e.\ into the sets
	\[
	\left\{ (k_{1},k_{2}):k_{1}\in\{0,1,\ldots,\ell_{1}-1\},\:k_{2}\in\{0,\ldots,b_{2}\}\right\} ,\hphantom{\quad\ell_{1}\in\left\{ b_{1}-b_{2}+1,\ldots,b_{1}-1\right\} .}
	\]
	\[
	\left\{ (\widetilde{k}_{1},\widetilde{k}_{2}):\widetilde{k}_{1}\in\{\ell_{1},\ldots,b_{1}\},\:\widetilde{k}_{2}\in\{0,\ldots,b_{2}\}\right\} ,\quad\ell_{1}\in\left\{ b_{1}-b_{2}+1,\ldots,b_{1}-1\right\} .
	\]
	Then, it follows for $\ell_{1}\in\left\{ b_{1}-b_{2}+1,\ldots,b_{1}-1\right\} $
	\begin{align*}
	& \phantomeq\sum_{k_{1}=0}^{\ell_{1}-1}\sum_{k_{2}=0}^{b_{2}}\ \sum_{\widetilde{k}_{1}=\ell_{1}}^{b_{1}}\sum_{\widetilde{k}_{2}=0}^{b_{2}}\theta(k_{1},k_{2})\cdot q_{red}\left((k_{1},k_{2});(\widetilde{k}_{1},\widetilde{k}_{2})\right)\\
	& =\sum_{\widetilde{k}_{1}=\ell_{1}}^{b_{1}}\sum_{\widetilde{k}_{2}=0}^{b_{2}}\ \sum_{k_{1}=0}^{\ell_{1}-1}\sum_{k_{2}=0}^{b_{2}}\theta(\widetilde{k}_{1},\widetilde{k}_{2})\cdot q_{red}\left((\widetilde{k}_{1},\widetilde{k}_{2});(k_{1},k_{2})\right)\\
	\Leftrightarrow\quad & \underbrace{\sum_{k_{2}=b_{2}-(b_{1}-\ell_{1})}^{b_{2}}\theta(\ell_{1}-1,k_{2})\cdot}_{=P(Y_{1}=\ell_{1}-1,Y_{2}>b_{2}-\left(b_{1}-(\ell_{1}-1)\right)}\nu+\underbrace{\vphantom{\sum_{k_{2}=}^{b_{2}}}\theta(\ell_{1}-1,b_{2}-\left(b_{1}-(\ell_{1}-1)\right))}_{=P(Y_{1}=\ell_{1}-1,Y_{2}=b_{2}-\left(b_{1}-(\ell_{1}-1)\right)}\cdot\frac{1}{2}\nu\\
	&=\underbrace{\sum_{\widetilde{k}_{2}=0}^{b_{2}}\theta(\ell_{1},\widetilde{k}_{2})}_{=P(Y_{1}=\ell_{1})}\cdot\lambda_{1}.
	\end{align*}
	Hence, we have shown for $\ell_{1}\in\left\{ b_{1}-b_{2}+1,\ldots,b_{1}-1\right\} $
	\begin{align*}
	P(Y_{1}=\ell_{1})\cdot\lambda_{1} & =P(Y_{1}=\ell_{1}-1,Y_{2}=b_{2}-\left(b_{1}-(\ell_{1}-1)\right))\cdot\frac{1}{2}\nu\\
	& \phantomeq+P(Y_{1}=\ell_{1}-1,Y_{2}>b_{2}-\left(b_{1}-(\ell_{1}-1)\right))\cdot\nu\\
	& =P(Y_{1}=\ell_{1}-1,Y_{2}=\ell_{1}-1-b_{1}+b_{2})\cdot\frac{1}{2}\nu\\
	& \phantomeq+P(Y_{1}=\ell_{1}-1,Y_{2}>\ell_{1}-1-b_{1}+b_{2})\cdot\nu.\\
	\end{align*}
	For $\ell_{1}=b_{1}$, equation \prettyref{eq:BS-state-homo-Yl3}
	can be proven by a cut, which divides $E$ into complementary sets
	according to the size of the inventory at location $1$ that is less
	than or equal to $b_{1}-1$ or greater than $b_{1}-1$, i.e.\ into
	the sets
	\[
	\Big\{(k_{1},k_{2}):k_{1}\in\{0,1,\ldots,b_{1}-1\},\:k_{2}\in\{0,\ldots,b_{2}\}\Big\},
	\]
	\[
	\left\{ (\widetilde{k}_{1},\widetilde{k}_{2}):\widetilde{k}_{1}=b_{1},\:\widetilde{k}_{2}\in\{0,\ldots,b_{2}\}\right\} .
	\]
	Then, it follows for
	\begin{align*}
	& \phantomeq\sum_{k_{1}=0}^{b_{1}-1}\sum_{k_{2}=0}^{b_{2}}\ \sum_{\widetilde{k}_{1}=b_{1}}^{b_{1}}\sum_{\widetilde{k}_{2}=0}^{b_{2}}\theta(k_{1},k_{2})\cdot q_{red}\left((k_{1},k_{2});(\widetilde{k}_{1},\widetilde{k}_{2})\right)\\
	& =\sum_{\widetilde{k}_{1}=b_{1}}^{b_{1}}\sum_{\widetilde{k}_{2}=0}^{b_{2}}\ \sum_{k_{1}=0}^{b_{1}-1}\sum_{k_{2}=0}^{b_{2}}\theta(\widetilde{k}_{1},\widetilde{k}_{2})\cdot q_{red}\left((\widetilde{k}_{1},\widetilde{k}_{2});(k_{1},k_{2})\right)\\
	\Leftrightarrow\quad & \phantomeq\underbrace{\theta(b_{1}-1,b_{2})}_{=P(Y_{1}=b_{1}-1,Y_{2}=b_{2})}\cdot\nu+\underbrace{\theta(b_{1}-1,b_{2}-1)}_{=P(Y_{1}=b_{1}-1,Y_{2}=b_{2}-1)}\cdot\frac{1}{2}\nu=\underbrace{\sum_{\widetilde{k}_{2}=0}^{b_{2}}\theta(b_{1},\widetilde{k}_{2})}_{=P(Y_{1}=b_{1})}\cdot\lambda_{1}.
	\end{align*}
	Hence, we have shown
	\begin{align*}
	P(Y_{1}=b_{1})\cdot\lambda_{1} & =P(Y_{1}=b_{1}-1,Y_{2}=b_{2}-1)\cdot\frac{1}{2}\nu
	+P(Y_{1}=b_{1}-1,Y_{2}=b_{2})\cdot\nu.
	\end{align*}
	
	For $\ell_{2}\in\left\{ 1,\ldots,b_{2}\right\} $, equation \prettyref{eq:BS-state-homo-Yl4}
	can be proven by a cut, which divides $E$ into complementary sets
	according to the size of the inventory at location $2$ that is less
	than or equal to $\ell_{2}-1$ or greater than $\ell_{2}-1$, i.e.\ into
	the sets
	\[
	\Big\{(k_{1},k_{2}):k_{1}\in\{0,\ldots,b_{1}\},\:k_{2}\in\{0,1,\ldots,\ell_{2}-1\}\Big\},\hphantom{\quad\ell_{1}\in\left\{ 1,\ldots,b_{2}\right\} .}
	\]
	\[
	\left\{ (\widetilde{k}_{1},\widetilde{k}_{2}):\widetilde{k}_{1}\in\{0,\ldots,b_{1}\},\:\widetilde{k}_{2}\in\{\ell_{2},\ldots,b_{2}\}\right\} ,\quad\ell_{2}\in\left\{ 1,\ldots,b_{2}\right\} .
	\]
	Then, it follows for $\ell_{2}\in\left\{ 1,\ldots,b_{2}\right\} $
	\begin{align*}
	& \phantomeq\sum_{k_{1}=0}^{b_{1}}\sum_{k_{2}=0}^{\ell_{2}-1}\ \sum_{\widetilde{k}_{1}=0}^{b_{1}}\sum_{\widetilde{k}_{2}=\ell_{2}}^{b_{2}}\theta(k_{1},k_{2})\cdot q_{red}\left((k_{1},k_{2});(\widetilde{k}_{1},\widetilde{k}_{2})\right)\\
	& =\sum_{\widetilde{k}_{1}=0}^{b_{1}}\sum_{\widetilde{k}_{2}=\ell_{2}}^{b_{2}}\ \sum_{k_{1}=0}^{b_{1}}\sum_{k_{2}=0}^{\ell_{2}-1}\theta(\widetilde{k}_{1},\widetilde{k}_{2})\cdot q_{red}\left((\widetilde{k}_{1},\widetilde{k}_{2});(k_{1},k_{2})\right)\\
	\Leftrightarrow\quad & \underbrace{\vphantom{\sum_{\widetilde{k}_{1}=0}^{b_{1}}}\theta(b_{1}-\left(b_{2}-(\ell_{2}-1)\right),\ell_{2}-1)}_{=P(Y_{1}=b_{1}-\left(b_{2}-(\ell_{2}-1)\right),Y_{2}=\ell_{2}-1)}\cdot\frac{1}{2}\nu\underbrace{\sum_{k_{1}=b_{1}-(b_{2}-\ell_{2})}^{b_{1}}\theta(k_{1},\ell_{2}-1)}_{=P(Y_{1}>b_{1}-\left(b_{2}-(\ell_{2}-1)\right),Y_{2}=\ell_{2}-1)}\cdot\nu\\
	&=\underbrace{\sum_{\widetilde{k}_{1}=0}^{b_{1}}\theta(\widetilde{k}_{1},\ell_{2})}_{=P(Y_{2}=\ell_{2})}\cdot\lambda_{2}.
	\end{align*}
	Hence, we have shown for $\ell_{2}\in\left\{ 1,\ldots,b_{2}\right\} $
	\begin{align*}
	P(Y_{2}=\ell_{2})\cdot\lambda_{2} & =P(Y_{1}=b_{1}-\left(b_{2}-(\ell_{2}-1)\right),Y_{2}=\ell_{2}-1)\cdot\frac{1}{2}\nu\\
	& \phantomeq+P(Y_{1}>b_{1}-\left(b_{2}-(\ell_{2}-1)\right),Y_{2}=\ell_{2}-1)\cdot\nu\\
	& =P(Y_{1}=b_{1}-b_{2}+\ell_{2}-1,Y_{2}=\ell_{2}-1)\cdot\frac{1}{2}\nu\\
	& \phantomeq+P(Y_{1}>b_{1}-b_{2}+\ell_{2}-1,Y_{2}=\ell_{2}-1)\cdot\nu.
	\end{align*}
\end{proof}


\textbf{Acknowledgement:} The present paper contains some results of my PhD thesis \cite{otten:17}, written under the supervision of Hans Daduna. I am deeply grateful to
him for his support and advice. Furthermore, I thank Karsten Kruse for helpful discussions on the subject of the paper.

\bibliographystyle{plainnat}
\bibliography{sn-bibliography}
	
\end{document}